\DeclareMathOperator{\adj}{adj}
\definecolor{SPECorange}{rgb}{1.0,.5625,0}
\definecolor{SPECblue}{rgb}{0,0,0.75}
\definecolor{SPECred}{rgb}{0.75,0,0}
\definecolor{SPECgreen}{rgb}{0,0.75,0}
\definecolor{SPECblack}{rgb}{0.75,0.75,0.75}
\definecolor{SPECorangebw}{rgb}{0.2,0.2,0.2}
\definecolor{SPECbluebw}{rgb}{0.8,0.8,0.8}
\definecolor{SPECredbw}{rgb}{0.6,0.6,0.6}
\definecolor{SPECgreenbw}{rgb}{0.4,0.4,0.4}
\def\Rc{\raisebox{-0.11cm}[0.1cm][0.0cm]{\mbox{$\Rsh$}}}
\def\rc{\raisebox{0.11cm}[0.1cm][0.0cm]{\mbox{\rotatebox[origin=c]{180}{\reflectbox{\Rc}}}}}
\newcommandx{\tikzrotation}[6][1=black,2=,3=,4=]{
  \node at (#5,#6+.025) [align=center,scale=1.0,color=#1]{$\Rc$}; %
  \node at (#5,#6+.270) [align=center,scale=1.0,color=#1]{$\rc$}; %
  \node at (#5+.1,#6+.1) [align=center,scale=1.0,color=#1]{#2};%
  \node at (#5,#6+0.4) [align=center,scale=1.0,color=#1]{#3};%
  \node at (#5,#6-0.2) [align=center,scale=1.0,color=#1] {#4};
}
\newcommand{\uppertriangular}[3][black]{
  \draw[color=#1] (#2,#3) -- (#2+1.4,#3) -- (#2+1.4,#3+1.4) -- cycle;
}
\newcommand{\shiftthroughrl}[4][black]{
  \path[color=#1,->,out=180,in=0,looseness=0.25] (#2-0.1,#4+0.1) edge
  (#3+0.05,#4+0.1); }
\newcommand{\shiftthroughlr}[4][black]{
  \path[color=#1,->,out=0,in=180,looseness=0.25] (#2+0.05,#4+0.1) edge
  (#3-0.1,#4+0.1); }
\newcommand{\transferbulgelr}[4][black]{ 
  \path[->,out=-45,in=-135,looseness=.25] (#2+0.1,#4+0.2) edge (#3-0.1,#4+0.2);
}
\newcommand{\turnoverrl}[5][black]{
  \path[color=#1,->,out=180,in=0] (#2-0.1,#3+0.1) edge (#4+0.05,#5+0.1);
}
\newcommand{\turnoverlr}[5][black]{
  \path[color=#1,->,out=0,in=180] (#2+0.05,#3+0.1) edge (#4-0.1,#5+0.1);
}
\newcommand{\drawbrace}[5][black]{
  \draw[color=#1,decorate,decoration=brace] (#3+0.05,#4)--(#2-0.05,#4);
  \node[color=#1] at (${0.5}*(#2+#3,#4+#4+0.6)$) [align=center] {#5};
}
\newtheorem{example}[theorem]{\it Example}
\providecommand{\abs}[1]{\left\lvert#1\right\rvert} %
\newcommand{\norm}[1]{\mbox{$\parallel\!#1\!\parallel$}}
\newcommand{\diagg}[1]{\mbox{diag$\left\{#1\right\}$}}
\newcommand{\reals}{\mbox{$\mathbb{R}$}}
\newcommand{\cplxs}{\mbox{$\mathbb{C}$}}
\newcommand{\cn}{\mbox{$\cplxs^{n}$}}
\newcommand{\cnxn}{\mbox{$\cplxs^{n \times n}$}}
\newcommand{\absval}[1]{\mbox{$\mid\!#1\!\mid$}}
\newcounter{mymac@matlab}
\newcommand{\matlab}{MATLAB%
  \ifnum\value{mymac@matlab}<1%
  \textsuperscript{\textregistered}%
  \setcounter{mymac@matlab}{1}%
  \fi%
}
\title{Fast and backward stable computation of roots of polynomials, Part II: backward
  error analysis; companion matrix and companion pencil\thanks{
The research was partially supported by 
the Research Council KU Leuven, project  
C14/16/056 (Inverse-free Rational Krylov Methods: Theory and Applications), by
an INdAM/GNCS project, and by the Spanish Ministry
of Economy and Competitiveness, through the Severo Ochoa Programme for Centers
of Excellence in R\&D (SEV-2015-0554).
}}
\author{Jared L.\ Aurentz\footnotemark[2] \and %
  Thomas Mach\footnotemark[5] \and %
  Leonardo Robol\footnotemark[6] \and 
  Raf Vandebril\footnotemark[3] \and %
  David S.\ Watkins\footnotemark[4]}
\begin{document}
\makeatletter
\let\ftype@table\ftype@figure
\makeatother

\maketitle

\renewcommand{\thefootnote}{\fnsymbol{footnote}}

\footnotetext[2]{%
Instituto de Ciencias Matem\'aticas, Universidad Aut\'onoma de Madrid,
 Madrid, Spain
\mbox{(\texttt{JaredAurentz@gmail.com})}.
}

\footnotetext[3]{
  Department of Computer Science, University of Leuven, KU~Leuven, Leuven,  Belgium;
  \mbox{(\texttt{Raf.Vandebril@cs.kuleuven.be})}.}

\footnotetext[6]{
   Istituto di Scienza e Tecnologie dell Informazione `A. Faedo' (ISTI), CNR, Pisa, Italy;
  \mbox{(\texttt{Leonardo.Robol@isti.cnr.it})}.}%

\footnotetext[4]{%
  Department of Mathematics, Washington State University, Pullman, WA
  99164-3113, USA;  \mbox{(\texttt{watkins@math.wsu.edu})}.}

\footnotetext[5]{%
  Department of Mathematics, School of Science and Technology, Nazarbayev
  University, 010000 Astana, Kazakhstan;
  \mbox{(\texttt{thomas.mach@nu.edu.kz})}.  }

\renewcommand{\thefootnote}{\arabic{footnote}}

\date{\today}
\maketitle

\begin{abstract}
This work is a continuation of \emph{Fast and backward stable computation of roots 
of polynomials} by J.L.\ Aurentz, T.\ Mach, R.\
  Vandebril, and D.S.\ Watkins, SIAM Journal on Matrix Analysis and Applications, 36(3):
  942--973, 2015.  In that paper we introduced a companion QR algorithm that finds the
  roots of a polynomial by computing 
  the eigenvalues of the companion matrix in $O(n^{2})$ time using $O(n)$
  memory.  We proved that the method is backward stable.   Here we introduce, as 
  an alternative, a companion QZ algorithm that solves a generalized eigenvalue problem
  for a companion pencil. 
  More importantly, we provide an improved backward error analysis that takes advantage
  of the special structure of the problem.  The improvement is also due, in part, to an improvement
  in the accuracy (in both theory and practice) 
  of the turnover operation, which is the key component of our algorithms.
  We prove that for the companion QR algorithm, 
  the backward error on the polynomial coefficients varies linearly with the norm of the 
  polynomial's vector of coefficients.  Thus the companion QR algorithm has a smaller backward error 
  than the unstructured QR algorithm (used by MATLAB's \texttt{roots} command, for example), 
  for which the backward error on the polynomial coefficients grows quadratically with the norm
  of the coefficient vector.   The companion QZ algorithm has the same favorable backward 
  error as companion QR, provided that the polynomial coefficients are properly scaled.
\end{abstract}

\begin{keywords}
  polynomial, %
  root, %
  companion matrix, %
  companion pencil, %
  eigenvalue, %
  Francis algorithm, %
  QR algorithm, %
  QZ algorithm, %
  core transformation, %
  backward stability
\end{keywords}

\begin{AMS}
  65F15, 
  65H17,
  15A18, 
  65H04
\end{AMS}

\pagestyle{myheadings} \thispagestyle{plain} %
\markboth{J.\ L.\ Aurentz, T.\ Mach, L. Robol, R.\ Vandebril, and D.\ S.\ Watkins}{Fast
  and  stable computation of roots}

\section{Introduction}  
\label{sec:introduction}

Consider the problem of computing the $n$ zeros of a complex polynomial 
\begin{align*}
  p(z) = a_{n}z^{n} + a_{n-1}z^{n-1} + \cdots + a_{1}z + a_{0}, \quad a_{n} \neq 0,\ \ a_{0} \neq 0,
\end{align*}
expressed in terms of the monomial basis.  One way to do this is to 
form the companion matrix
\begin{align}
  A = \begin{bmatrix}
    &        &   & -a_{0}/a_{n} \\
    1 &        &   & -a_{1}/a_{n} \\
    & \ddots &   & \vdots \\
    &        & 1 & -a_{n-1}/a_{n}
  \end{bmatrix},
\label{eq:companionmatrix}
\end{align} 
and compute its eigenvalues.  This is what MATLAB's \texttt{roots} command does.
But \texttt{roots} does not exploit the special structure of the companion
matrix, so it requires $O(n^{2})$ storage (one matrix) and $O(n^{3})$ flops
using Francis's implicitly-shifted $QR$ algorithm \cite{Fra61}.  It is natural
to ask whether we can save space and flops by exploiting the structure.  This
has, in fact, been done by Bini et al.\ \cite{BiBoEi10}, Boito et al.\ \cite{BoEiGe},
Chandrasekaran et al.\ \cite{q232}, and Aurentz et al.\ \cite{AuMaVaWa15}.  
All of the methods proposed in these papers use
the unitary-plus-rank-one structure of the companion matrix to build a special
data structure that brings the storage down to $O(n)$.  Then Francis's
algorithm, operating on the special data structure, has a flop count of
$O(n^{2})$.  Based on the tests in \cite{AuMaVaWa15}, our method appears to be the
fastest of the several that have been proposed. %
Moreover our algorithm is the only one that has been proved to be backward stable.
In this paper we will refer to our method as the \emph{companion QR algorithm}.

In cases where the polynomial has a particularly small leading coefficient, one might hesitate
to use the companion matrix, since division by a tiny $a_{n}$ will result in very
large entries in (\ref{eq:companionmatrix}).  This might adversely affect accuracy.  
An alternative to division by $a_{n}$ is to work with a \emph{companion pencil} 
\begin{align}
\label{eq:clp:top}
  A - \lambda B = \begin{bmatrix}
    0& & & &-a_{0}\\
    1&0& & &-a_{1}\\
    &\ddots&\ddots& &\vdots\\
    & &1&0&-a_{n-2}\\
    & & &1&-a_{n-1}\\
  \end{bmatrix} 
  - \lambda 
  \begin{bmatrix}
    1& & & &\\
    &1& & &\\
    & &\ddots& &\\
    & & &1&\\
    & & & &a_{n}\\
  \end{bmatrix},
\end{align}
which also has the roots of $p$ as its eigenvalues.  More generally we can consider any pencil  of
the form
\begin{align}
\label{eq:cpp}
  V - \lambda W = \left[\begin{array}{cccccc}
      0 &   &        &   &  -v_{1}     \\
      1 &   &        &   &  -v_{2}      \\
        & 1 &        &   &  -v_{3}      \\
        &   & \ddots &   & \vdots  \\
        &   &        & 1 &  -v_{n}
    \end{array}\right]  - \lambda \,\left[\begin{array}{cccccc}
      1 &   &        &   &  w_{1}  \\
        & 1 &        &   &  w_{2}   \\
        &   & \ddots &   & \vdots        \\
        &   &        & 1 & w_{n-1}   \\
        &   &        &   & w_{n}     
    \end{array}\right],
\end{align}
where 
  \begin{align}
    v_{1} &= a_{0},\nonumber\\
    v_{i+1} + w_{i} &= a_{i}, \ \text{for}\
    i=1,\dotsc,n-1,\text{and} \label{eq:def:vw} \\
    w_{n} &= a_{n}. \nonumber
  \end{align} 
One easily checks that all pencils of this form also have the zeros of $p$ as their eigenvalues \cite{AuMaVaWa14b,q573}.  
This generalized eigenvalue problem is in Hessenberg-triangular form and can be solved 
by the Moler-Stewart variant \cite{q361,Wa10,b333} of Francis's algorithm, commonly called the QZ algorithm. 
In this paper we introduce a generalization of the method of \cite{AuMaVaWa15} to matrix pencils, which  
we will call the \emph{companion QZ algorithm}.  
This is a straightforward exercise, and it is not the main point of this publication.

This paper is really about backward error analysis.  After the publication of \cite{AuMaVaWa15} we realized that
the analysis in that paper was not quite right.  There is a factor that is constant in exact arithmetic and that we treated
as a constant.  We should have taken into account the backward error in that factor.  
In this paper we take the opportunity to repair that error.  Moreover we 
will improve the analysis by taking into account the structure
of the backward error that follows from the structure of the companion matrix or pencil.   

Our intuition told us that there would be cases 
where the companion QR method fails but companion QZ succeeds:  
just take a polynomial with a tiny leading coefficient!  
Once we had implemented companion QZ, we looked for examples 
of this kind, but to our surprise we were not able to find any.  
The companion QR method is much more robust than we had realized!

This discovery led us to take a closer look at the backward error analysis
and try to explain why the companion QR algorithm works so well.  
This paper is the result of that investigation.  We prove that the 
companion matrix method is just as good as the companion pencil method.  Both methods have significantly 
better backward errors than a method like MATLAB's \texttt{roots} that computes the eigenvalues of the companion matrix 
without exploiting the special structure.  This is the main message of this paper.  

The paper is organized as follows.  Section~\ref{sec:background} briefly discusses previous work in 
this area.  The memory-efficient  $O(n)$
factorization of a companion pencil $V - \lambda W$  of the form (\ref{eq:cpp}) 
is introduced in Section~\ref{sec:core}, together with some necessary background information. 
In Section~\ref{sec:companion:QZ:algorithm} we introduce the companion QZ algorithm 
and demonstrate its $O(n^{2})$ performance.  

The heart of the paper is the backward error analysis in Section~\ref{sec:stab}.
The main results are as follows:  Let $p$ be a monic polynomial with coefficient
vector
$a = \left[\begin{array}{cccc} a_{0} & \cdots & a_{n-1} & 1 \end{array}\right]$.
Suppose we compute the zeros of $p$ by some method, and let $\hat{p}$, with
coefficient vector $\hat{a}$, be the monic polynomial that has the computed
roots as its exact zeros.  The \emph{absolute normwise backward error on the
  coefficients} is $\norm{a - \hat{a}}$.  If our companion QR method is used to
do the computation, the backward error satisfies
$\norm{a - \hat{a}} \lesssim u\,\norm{a}$, where $u$ is the unit
roundoff and $\lesssim$ means less than or equal up to a modest
 multiplicative constant depending on $n$ as a low-degree 
 polynomial.\footnote{This result is valid for the improved turnover that is
  introduced \S~\ref{subsec:newturn}.  If the old turnover is used, we get
  $\norm{a - \hat{a}} \lesssim u\norm{a}^{2}$.  We reported this in
  \cite{AuMaVaWa15}, but for a valid proof see \cite{TW683}.  Even with the old
  turnover we can get a better result as follows.  Instead of comparing $p$ with
  $\hat{p}$, we can compare with $\gamma\hat{p}$, where $\gamma$ is chosen so
  that $\norm{a - \gamma\hat{a}}$ is minimized.  Then we get
  $\norm{a - \gamma\hat{a}} \lesssim u\,\norm{a}$, as we have shown in
  \cite{TW683}.}  This is an optimal result, and it is better than can be
achieved by the unstructured Francis algorithm applied to the companion matrix.
The latter gives only $\norm{a - \hat{a}} \lesssim u\, \norm{a}^{2}$.  If the
companion \emph{pencil} is used, we do not get the optimal result unless we
apply the companion QZ algorithm (or any stable algorithm) to a rescaled
polynomial $p/\norm{a}$.  If we do this, we get the optimal result
$\norm{a - \hat{a}} \lesssim u\,\norm{a}$.

  


\section{Earlier  work}\label{sec:background}
There are many ways \cite{q036} to compute roots of polynomials. Here we focus on companion 
matrix and pencil methods.   
Computing roots of polynomials in the monomial basis via the companion matrix has been the
subject of study of several research teams. See \cite{AuMaVaWa15} for a summary.  

There is, to our knowledge, only one article by Boito, Eidelman, and
Gemignani \cite{BoEiGe14} that presents a structured QZ algorithm for computing roots of
polynomials in the monomial basis. The authors consider a matrix pencil, say $(V,W)$,
where both $V$ and $W$ are of unitary-plus-rank-one form, $V$ is Hessenberg and $W$ is
upper triangular. Both matrices are represented efficiently by a quasiseparable
representation \cite{BoEiGe14}. 
To counter the effects of roundoff errors, some redundant quasiseparable generators
to represent the unitary part are created; a compression algorithm to reduce the number of
generators to a minimal set is presented.  The computational cost of each structured QZ
iteration is $O(n)$.  
A double shift version of this algorithm is presented by Boito, Eidelman, and Gemignani in \cite{BoEiGe16}.

The companion pencil \eqref{eq:clp:top} is the most frequently appearing one in
the literature, but, there is a wide variety of comparable matrix pencils with
the same eigenvalues \cite{EasKimShaVan14,AuMaVaWa15b,q573}, 
many of which are highly structured.  In this article we will focus on companion
pencils of the form \eqref{eq:cpp}.


\section{Core transformations and factoring  companion matrices}
\label{sec:core}
Core transformations will be used throughout the paper and are the
building blocks for a fast algorithm and an efficient representation of the
companion pencil.


\subsection{Core transformations}
\label{sec:core:transfo}
A nonsingular matrix $G_{i}$ identical to the identity matrix except for a $2 \times 2$
submatrix in position $(i:i+1,i:i+1)$ is called a \emph{core transformation}. The subscript
$i$ refers to the position of the diagonal block $(i:i+1,i:i+1)$ called the \textit{active part} of
the core transformation. Core transformations $G_{i}$ and $G_{j}$
 commute if $\abs{i-j}>1$.

 In previous work \cite{AuVaWa13,AuVaWa14} the authors have used
 non-unitary core transformations, 
but here we will only use unitary core
 transformations.  Thus, in this paper, the term \emph{core
   transformation} will mean \emph{unitary} core transformation; the
 active part could be a rotator or a reflector, for example.

To avoid excessive index usage, and to ease the understanding of the interaction of core
transformations, we depict them as
$\begin{smallmatrix} \Rc\\[0.7ex] \rc \end{smallmatrix}$, where the tiny arrows
pinpoint the active part. For example, every unitary upper Hessenberg matrix $Q$ can be factored 
as the product of $n-1$ core transformations in a \textit{descending} order $Q=G_{1} G_{2} \cdots G_{n-1}$. 
Such a \textit{descending sequence} of core transformations is represented pictorially by 
\begin{equation*}
  \begin{tikzpicture}[scale=1.66,y=-1cm,inner xsep=0cm,baseline={(current bounding box.center)}]
    \pgfmathsetmacro{\xoff}{0.0}
    \pgfmathsetmacro{\yoff}{0.0}
    \foreach \j in {0,0.2,...,1.0} {%
      \tikzrotation{\j+\xoff}{\j+\yoff}%
    }%
\end{tikzpicture}\quad .
\end{equation*}

All of the algorithms in this paper are described in terms of core transformations and two operations: the
\emph{fusion} and the \emph{turnover}.

\paragraph{Fusion} The product of two unitary core transformations $G_{i}$
and $H_{i}$ is again a unitary core transformation. Pictorially we can write this as
\begin{align*}
  \begin{matrix}\Rc & \Rc\\\rc& \rc\end{matrix}\ 
= \ \begin{matrix}\Rc\\ \rc\end{matrix}.
\end{align*} 

\paragraph{Turnover} The product of three core transformations
$F_{i} G_{i+1} H_{i}$ is an essentially $3\times 3$ unitary matrix that can be
factored also 
as $F_{i+1} G_{i} H_{i+1}$, depicted as
\begin{align*}
  \begin{matrix}
    \Rc & &\Rc\\
    \rc &\Rc &\rc\\
    &\rc &\\
  \end{matrix} \ = 
  \begin{bmatrix}
    \times & \times &\times\\
    \times & \times &\times\\
    \times & \times &\times\\
  \end{bmatrix} = \
  \begin{matrix}
    &\Rc&\\
    \Rc &\rc &\Rc\\
    \rc& &\rc\\
  \end{matrix}\,.
\end{align*} %

\paragraph{Pictorial action of a turnover and fusion}
We will see, when describing the algorithms, that
there are typically one or more core transformations not fitting the pattern (called the \emph{misfit(s)}), that
need to be moved around by executing turnovers and similarities. To describe clearly the movement of the
misfit we use the following pictorial description:
\begin{center}
  \begin{tikzpicture}[scale=1.66,y=-1cm]
    \tikzrotation{-0.2}{0.2}
    \tikzrotation{0.0}{0.0}
    \tikzrotation{0.2}{0.2}
    \tikzrotation{0.4}{0.0}
    \node at (-.2,0.75) [align=center] {$B_{2}$};
    \node at (0.0,0.55) [align=center] {$G_{1}$};
    \node at (0.2,0.75) [align=center] {$G_{2}$};
    \node at (0.4,0.55) [align=center] {$B_{1}$};  
    \turnoverrl{0.4}{0.0}{-0.2}{0.2}
    \node[above] at (0.6,0.7) {,};
  \end{tikzpicture}
\end{center} where $B_{1}$ is the misfit before the turnover and
$B_{2}$ after the turnover. The core transformations $G_{1}$ and $G_{2}$ are involved in the
turnover and change once $B_{2}$ is created. The picture is mathematically equivalent to
$G_1G_2 B_1 = B_2 \hat{G}_1 \hat{G}_2$. 
Other possible turnovers are
\begin{center}
  \begin{tikzpicture}[scale=1.66,y=-1cm]
    \tikzrotation{-0.2}{0.2}
    \tikzrotation{0.0}{0.0}
    \tikzrotation{0.2}{0.2}
    \tikzrotation{0.4}{0.0}
    \turnoverlr{-0.2}{0.2}{0.4}{0.0}

    \node[above right] at (0.5,0.4) {,};
    
    \tikzrotation{0.8}{0.0}
    \tikzrotation{1.0}{0.2}
    \tikzrotation{1.2}{0.0}
    \tikzrotation{1.4}{0.2}
    \turnoverlr{0.8}{0.0}{1.4}{0.2}
    
    \node[above right] at (1.5,0.4) {, and};
    
    \tikzrotation{2.4}{0.0}
    \tikzrotation{2.6}{0.2}
    \tikzrotation{2.8}{0.0}
    \tikzrotation{3.0}{0.2}
    \turnoverrl{3.0}{0.2}{2.4}{0.0}
    \node[above right] at (3.1,0.4) {.};
  \end{tikzpicture}
\end{center}
Directly at the start and at the end of each QZ (and QR) iteration, a misfit is
fused with another core transformation, so that it vanishes.  We will describe this
pictorially as
\begin{center}
  \begin{tikzpicture}[scale=1.66,y=-1cm]
    \tikzrotation[black][][$G$]{-0.2}{0.2}
    \tikzrotation[black][][$B$]{0.2}{0.2}
    \path[->,out=180,in=0] (0.10,0.3) edge (-0.15,0.3);     
    \node [above] at (0.75,0.5) {or};
    \tikzrotation[black][][$B$]{1.3}{0.2}
    \tikzrotation[black][][$G$]{1.7}{0.2}
    \path[<-,out=180,in=0] (1.60,0.3) edge (1.35,0.3);     
    \node[above] at (1.9,0.5) {,};
  \end{tikzpicture}
\end{center}
where $B$ is to be fused with $G$.

\subsection{A factorization of the companion pencil}
\label{subsec:factor}
The pencil matrices $V$ and $W$ from \eqref{eq:cpp} are both unitary-plus-rank one, $V$
is upper Hessenberg and $W$ is upper triangular. 
We store $V$ in QR decomposed form:  $V=QR$, where $Q$ is unitary and upper Hessenberg,
and $R$ is upper triangular and unitary-plus-rank-one.  In fact
\begin{align}
  Q = \left[\begin{array}{ccccc}
      0 &   &        &   &  1     \\
      1 &   &        &   &  0       \\
      & 1 &        &   &  0        \\
      &   & \ddots &  & \vdots   \\
      & & & 1 & 0 
    \end{array}\right] \quad\mbox{and}\quad 
  R = \left[\begin{array}{ccccc}
      1 &   &        &   &  -v_{2}        \\
      & 1 &        &   &  -v_{3}        \\
      &   & \ddots &   & \vdots   \\
      &   &        & 1 & -v_{n}   \\
      & & & & -v_{1} 
    \end{array}\right].
\label{eq:qrfactv}
\end{align} 
We need efficient representations of $Q$, $R$, and $W$.  
$Q$ is easy;  it is the product of $n-1$ core transformations:
$Q=Q_{1}\dotsm Q_{n-1}$, with $Q_{i}(i:i+1,i:i+1)=\left[\begin{smallmatrix} 0 &
    1\\ 1 & 0\end{smallmatrix}\right]$. 
    
\paragraph{Factoring an upper triangular unitary-plus-rank-one matrix}  
The matrices $R$ \eqref{eq:qrfactv} and $W$ \eqref{eq:cpp} have exactly the same 
structure and can be factored in the same way.   This factorization was introduced and
studied in detail in \cite{AuMaVaWa15}, so we will just give a brief description here.  
We focus on $R$.  It turns out that 
for this factorization we need to add a bit of room by adjoining a row and column.  
Let 
\begin{align}\label{eq:bigr}
\underline{R} = \left[\begin{array}{ccccc|c}
      1 &   &        &   &  -v_{2}  & 0      \\
      & 1 &        &   &  -v_{3}  & 0      \\
      &   & \ddots &   & \vdots   & \vdots \\
      &   &        & 1 & -v_{n}  & 0  \\
      & & & & -v_{1} & 1 \\ \hline & & & & 0 & 0
    \end{array}\right].
\end{align}
This is just $R$ with a zero row and  nearly zero column added.  The $1$ in the last 
column ensures that $\underline{R}$ is unitary-plus-rank-one:  $\underline{R} = Y_{n} + \underline{z}e_{n}^{T}$,
where 
\begin{align}\label{eq:uxform}
Y_{n} = 
\left[\begin{array}{ccccc|c}
1 &   &        &   &    &        \\
  & 1 &        &   &    &       \\
  &   & \ddots &   &   & \vdots \\
  &   &        & 1 &   &   \\ 
 &   &        &   &   0  & 1     \\ \hline 
  &   &       &   &   1  & 0
\end{array}\right] \quad\mbox{and}\quad 
\underline{z} = -\left[\begin{array}{c} 
v_{2} \\ v_{3} \\ \vdots \\ v_{n} \\ v_{1} \\ \hline 
1
\end{array}\right].
\end{align} 

Let $C_{1},\ \dotsc,\ C_{n}$ be core transformations such that $C \underline{z} = C_{1}
\dotsm C_{n} \underline{z} = \alpha e_{1}$, where $\absval{\alpha} = \norm{\underline{z}}_{2}$. 
Let $B$ be the unitary Hessenberg matrix $B=C Y_{n}$.  Clearly 
$B=B_{1}\dotsm B_{n}$, where $B_{i}=C_{i}$
for $i=1,\dotsc,n-1$, and $B_{n}=C_{n}Y_{n}$.
This gives us a factorization of $\underline{R}$ as
\begin{align}
  \underline{R} = C_{n}^{*}\dotsm C_{1}^{*}( B_{1}\dotsm B_{n} + \alpha\,e_{1}\underline{y}^{T}) = C^{*}(B +
  \alpha\,e_{1}\underline{y}^{T}), \label{eq:factorize:upper:triangular}
\end{align} 
with $\underline{y}^{T}=e_{n}^{T} \in \reals^{n+1}$.   In the course of our algorithm, the core transformations
$B_{i}$, $C_{i}$, and the vector $\underline{y}$ will be modified repeatedly, but the form 
\eqref{eq:factorize:upper:triangular} for $\underline{R}$ will be preserved.  $\underline{R}$ remains upper
triangular with its last row  equal to zero.   The theory that supports these
claims can be found in \cite[\S~4]{AuMaVaWa15}.   Notice that the core transformations $B_{n}$ and 
$C_{n}$ both make use of row and column $n+1$.  Had we not added a row and column, this factorization 
would not have been possible.

Multiplying \eqref{eq:factorize:upper:triangular} on the left by $e_{n+1}^{T}$, we find that
$0 = e_{n+1}^{T}\underline{R} = e_{n+1}^{T}C^{*}B + 
\alpha\,e_{n+1}^{T}C^{*}e_{1}\underline{y}^{T}$, so \cite[Thm.~4.6]{AuMaVaWa15}
\begin{equation}\label{eq:yrecover}
\alpha\,\underline{y}^{T}=-(e_{n+1}^{T}C^{*}e_{1})^{-1}e_{n+1}^{T}C^{*}B.
\end{equation}
This equation demonstrates that the information about $\alpha\,\underline{y}^{T}$, 
which determines the rank-one part, is encoded
in the core transformations.   This means that we will be able to develop an algorithm that does not keep 
track of $\underline{y}$; the rank-one part can be simply ignored.   
If at any time we should need $\underline{y}$ or some part of $\underline{y}$, 
we could recover it from $C$ and $B$ using \eqref{eq:yrecover}.
However, as we show in 
\cite[\S~4]{AuMaVaWa15}, it turns out that we never need to use \eqref{eq:yrecover} in practice.  

Let $P = I_{(n+1) \times n}$,   the $(n+1) \times n$ matrix obtained by deleting the last column
from the $(n+1) \times (n+1)$ identity matrix.  Then $R = P^{T}\underline{R}P$, so our factored form of $R$ is 
\begin{align} 
\label{eq:fac:uptri:2}
  R = P^{T}C_{n}^{*}\dotsm C_{1}^{*}( B_{1}\dotsm B_{n} + \alpha\,e_{1}\underline{y}^{T})P. 
\end{align} 

The matrices $P$ and $P^{T}$ are included just so that the dimensions of $R$ come out right.
They play no active role in the algorithm, and we will mostly ignore them.  
Pictorially, for $n=8$, $\underline{R}$ (and hence also $R$) can be represented as 
\begin{center}
  \begin{tikzpicture}[scale=1.66,y=-1cm,inner xsep=0cm]
    \foreach \j in {0,0.2,...,1.4} {
      \tikzrotation{3.0-\j}{\j}
    }
    \drawbrace{1.6}{3.0}{1.7}{$C^{*}=C_{n}^{*}\dotsm C_{1}^{*}$}
    \draw (3.275,-0.2) -- (3.2,-0.2) -- (3.2,1.8) -- (3.275,1.8);
    \foreach \j in {0,0.2,...,1.4} {
      \tikzrotation{3.4+\j}{\j}
    } 
    \drawbrace{3.4}{4.8}{1.7}{$B=B_{1}\dotsm B_{n}$}
    \node[above] at (5.0,1.0) {$+$};
    \pgfmathsetmacro{\xoffe}{5.2}
    \draw (\xoffe+0.05,-0.1) -- (\xoffe+0.0,-0.1) -- (\xoffe+0.0,1.7) -- (\xoffe+0.05,1.7);    
    \foreach \j/\jj in {0/$1$,0.2/$0$,0.4/$0$,
      0.6/$0$,0.8/$0$,1.0/$0$,
      1.2/$0$,1.4/$0$,1.6/$0$} {
      \node at (\xoffe+0.2,\j) [align=center] {\jj};
    }
    \node at (\xoffe+0.2,1.85) [align=center] {$e_{1}$};
    \draw (\xoffe+0.35,-0.1) -- (\xoffe+0.4,-0.1) -- (\xoffe+0.4,1.7) -- (\xoffe+0.35,1.7);
    \pgfmathsetmacro{\xoff}{5.8}
    \draw (\xoff+0.05,-0.1) -- (\xoff,-0.1) -- (\xoff,0.1) -- (\xoff+0.05,0.1);%
     \foreach \j/\jj in {%
       0/{$\times$}, 0.2/{$\times$}, 0.4/{$\times$},
       0.6/{$\times$}, 0.8/{$\times$}, 1.0/{$\times$},
       1.2/{$\times$}, 1.4/{$\times$}, 1.6/{$\times$}} {%
       \node at (6.0+\j,0) [align=center] {\jj};%
     }%

    \draw (\xoff+1.95,-0.1) -- (\xoff+2.0,-0.1) -- (\xoff+2.0,0.1) -- (\xoff+1.95,0.1);%

    \drawbrace{\xoff}{\xoff+2.0}{0.2}{$\alpha\,\underline{y}^{T}$}
    \draw (\xoff+2.125,-0.2) -- (\xoff+2.2,-0.2) -- (\xoff+2.2,1.8) -- (\xoff+2.125,1.8);
    \drawbrace{1.6}{\xoff+2.2}{2.2}{$\underline{R}$}
    \node[above] at (\xoff+2.4,1.0) {.};
  \end{tikzpicture}
\end{center}
Since we can ignore $\alpha\,\underline{y}^{T}$, we see that $R$ is represented by two
sequences of core transformations,  $C$ and $B$.   
Hence  $A = QR$ is represented by three sequences of core transformations.  

The matrix $W$ admits a factorization of the same form as 
\eqref{eq:fac:uptri:2}
$$  W = P^{T}C_{W}^{*}(B_{W} + \alpha_{W}e_{1}\underline{y}_W^{T})P,$$
with $C_{W}[w_{1}, \ldots, w_{n}, 1]^{T} = \alpha_{W}e_{1}$. 

Thus $W$ is also represented by two sequences of core transformations.  Altogether the pencil
$(V,W)$ is represented by five sequences of core transformations.

\subsection{Core transformations and upper triangular matrices}
\label{sec:ut}
\label{para:thru:upper:triangular}
In the next section we will show how to compute the eigenvalues of the
matrix pencil $(V,W)$ via the QZ algorithm as described by Vandebril and Watkins \cite{VaWa12}.
An important operation is to refactor the product of an  upper triangular matrix times a
core transformation $RG_{i}$ as the product of a core transformation times an upper
triangular matrix\footnote{We assume that eigenvalues at zero or infinity are deflated beforehand, so the involved upper triangular matrices are nonsingular.} $\hat{G}_{i} \hat{R}$. The other way proceeds similarly.
Considering dense matrices we get pictorially
\begin{center}
  \begin{tikzpicture}
    \node at (0,0) 
    { $
      \begin{bmatrix}
        \times & \times &\times &\times &\times \\
        & \times &\times &\times &\times \\
        &  &\times &\times &\times \\
        &  & &\times &\times \\
        &  & & &\times \\
      \end{bmatrix} \
      \begin{matrix}\\ \Rc\\\rc \\ \\ \\\end{matrix} \ \ = \ \begin{bmatrix}
        \times & \times &\times &\times &\times \\
        & \times &\times &\times &\times \\
        & \times &\times &\times &\times \\
        &  & &\times &\times \\
        &  & & &\times \\
      \end{bmatrix} \ = \ \
      \begin{matrix}\\ \Rc\\\rc \\ \\ \\\end{matrix}\ 
      \begin{bmatrix}
        \times & \times &\times &\times &\times \\
        & \times &\times &\times &\times \\
        &  &\times &\times &\times \\
        &  & &\times &\times \\
        &  & & &\times \\
      \end{bmatrix}.  $ };
  \end{tikzpicture}
\end{center}
 Applying a nontrivial  core transformation from the right on the upper triangular matrix creates
a non-zero subdiagonal, which can be removed by
pulling out a nontrivial core transformation from the left.

In our case the upper triangular matrix is represented in a data-sparse way 
\eqref{eq:factorize:upper:triangular}. Instead of explicitly creating the upper triangular
matrix to interchange the order of the upper triangular matrix and a core transformation
we operate on the factored form directly.
There are two versions, \textit{passing a core transformation} from right to  left and
vice versa. 

Consider an arbitrary upper triangular unitary-plus-rank-one matrix, say,
$R=P^{T}C^{*}(B+\alpha\,e_{1}\underline{y}^{T})P$.
Let $G_{i}$ ($1 \leq i \leq n-1$) be the core transformation we want to move from right to left.  
We have $PG_{i} = \underline{G}_{i}P$, where $\underline{G}_{i}$ is the $(n+1)\times (n+1)$ version of $G_{i}$. 
We have $RG_{i}=P^{T}C^{*} (B+\alpha\,e_{1}\underline{y}^{T})\underline{G}_{i}P
=P^{T}C^{*} (B\underline{G}_{i}+\alpha\,e_{1}\underline{y}^{T}\underline{G}_{i})P$, so
$\underline{y}^{T}$ changes into $\hat{\underline{y}}^{T}=\underline{y}^{T}\underline{G}_{i}$. 
The next step is to execute a turnover
$B_{i}B_{i+1}\underline{G}_{i}=\tilde{\underline{G}}_{i+1}\hat{B}_{i}\hat{B}_{i+1}$.  We now have $RG_{i} = 
P^{T}C^{*}(\tilde{\underline{G}}_{i+1}\hat{B} + \alpha\,e_{1}\hat{\underline{y}}_{1}^{T})P = 
P^{T}C^{*}\tilde{\underline{G}}_{i+1}(\hat{B} + \alpha\,e_{1}\hat{\underline{y}}_{1}^{T})P$.  
In the last step we have used the fact that $\tilde{G}_{i+1}e_{1}=e_{1}$ because $i+1 > 1$.  
To complete the procedure we just need to do one more turnover, in which $\tilde{\underline{G}}_{i+1}$ interacts with $C_{i}^{*}$ and 
$C_{i+1}^{*}$.   Specifically 
$C_{i+1}^{*}C^{*}_{i}\tilde{\underline{G}}_{i+1}=\hat{\underline{G}}_{i}\hat{C}^{*}_{i+1}\hat{C}^{*}_{i}$, resulting in  
$RG_{i} = P^{T}\hat{\underline{G}}_{i}\hat{C}^{*}(\hat{B} + \alpha\,e_{1}\hat{\underline{y}}^{T})P$.  Finally
we have $P^{T}\hat{\underline{G}}_{i} = \hat{G}_{i}P^{T}$,  where $\hat{G}_{i}$ is the $n\times n$ version of 
$\hat{\underline{G}}_{i}$.  Here it is important that $i \leq n-1$. The final result is 
$RG_{i} = \hat{G}_{i}P^{T}\hat{C}^{*}(\hat{B} + \alpha\,e_{1}\hat{\underline{y}}^{T})P = \hat{G}_{i}\hat{R}$.  

The total computational effort required for the task is just two turnovers.  The operation 
$\hat{\underline{y}}^{T} = \underline{y}^{T}\underline{G}_{i}$ is not 
actually performed, as we do not keep track of $\underline{y}$.   

Pictorially for $n=8$ and $i=1$, we have
\begin{center}
  \begin{tikzpicture}[scale=1.66,y=-1cm]
    \tikzrotation[black][][][$G_{1}$]{8.4}{0}   
    \shiftthroughrl{8.4}{5.8}{0}    
    
    \tikzrotation[black][][][$\underline{G}_{1}$]{5.8}{0}   
    \turnoverrl{5.8}{0.0}{5.2}{0.2}
    
    \tikzrotation[black][][$\tilde{\underline{G}}_{2}$]{5.2}{0.2}   
    \shiftthroughrl{5.2}{4.8}{0.2}    
    
    \tikzrotation[black][][$\tilde{\underline{G}}_{2}$]{4.8}{0.2}   
    \turnoverrl{4.8}{0.2}{4.2}{0.0}
    
    \tikzrotation[black][][][$\hat{\underline{G}}_{1}$]{4.2}{0.0}   
    
    \shiftthroughrl{4.2}{2.8}{0}
    \tikzrotation[black][][][$\hat{G}_{1}$]{2.8}{0}
    
    \foreach \j in {0,0.2,...,1.4} {
      \tikzrotation{4.6-\j}{\j}
    }      
    \draw (5.075,-0.2) -- (5.0,-0.2) -- (5.0,1.8) -- (5.075,1.8);
    \foreach \j in {0,0.2,...,1.4} {
      \tikzrotation{5.4+\j}{\j}
    }
    \node[above left] at (7.9,1.0) {$+\; \alpha\,e_{1}\underline{y}^{T}$};
    \draw (8.125,-0.2) -- (8.2,-0.2) -- (8.2,1.8) -- (8.125,1.8);
    
    \node[above] at (8.6,1.0) {,};
  \end{tikzpicture}
\end{center}
where we have not depicted $P$ or $P^{T}$, and we have ignored the action on $\underline{y}^{T}$.

Notice that when we apply a core transformation $G_{n-1}$, we temporarily create $\tilde{\underline{G}}_{n}$, which makes
use of row/column $n+1$.  Here we see that the existence of an extra row/column is crucial to the
functioning of the algorithm.  

We can pass a core transformation from left to right through $R$ simply by reversing the above procedure.
We omit the details.

It is clear now that one can move a single core transformation through a factored upper
triangular matrix in either direction by executing only two turnovers. From now on, to ease the notation, 
we will depict our Hessenberg-triangular pencil in a simpler format:
\begin{equation}
  \begin{tikzpicture}[baseline={([yshift=.4cm]current bounding box.center)},scale=1.66,y=-1cm]
    \foreach \j in {0.0,0.2,0.4,...,1.2} {
      \tikzrotation{\j-0.2}{\j}
    }      
    \drawbrace{-0.2}{1.1}{1.7}{$Q$}

    \uppertriangular{1.6}{0.0}
    \node[above] at (2.5,.6) {$R$};
    \drawbrace{1.6}{3.0}{1.7}{$P^{T}C^{*}(B+\alpha\,e_{1}\underline{y}^{T})P$}
    %
    \drawbrace{-0.3}{3.1}{2.2}{$V$}
    %
    \node[above] at (3.4,1.1) {,};      
    \uppertriangular{4.0}{0.0}
    \node[above] at (4.9,.6) {$W$};
    \node[above] at (5.9,1.1) {,};    
    \drawbrace{4.0}{5.4}{1.7}{$P^{T}C_W^{*}(B_W+\alpha_{W}e_{1}\underline{y}_W^{T})P$}
    %
  \end{tikzpicture}
  \label{eq:tria}
\end{equation}
where we have replaced each upper triangular factor by a triangle. With this description,
we can immediately apply the algorithms from Vandebril and Watkins \cite{VaWa12}. For
completeness, however, we will redescribe the flow of a single 
QZ step. 

To facilitate the theoretical description we consider the product 
 $S = RW^{-1}$, which is another upper triangular matrix through which we need to pass
 core transformations. Moving a core transformation from the right to the left 
has two stages:
\begin{equation}\label{eq:passthroughs}
  \begin{tikzpicture}[baseline={(current bounding box.center)},scale=1.66,y=-1cm]
    \tikzrotation[black][][$X_i$]{.8}{0.6}
    \tikzrotation[black][][$Z_i$]{3.4}{0.6}
    \tikzrotation[black][][$U_i$]{5.4}{0.6}
    \shiftthroughrl{5.4}{3.4}{0.6} 
    \shiftthroughrl{3.4}{.8}{0.6} 
    \uppertriangular{1.2}{0.0}
    \node[above] at (2.1,.6) {$R$};  


    \uppertriangular{3.6}{0.0} 
    \node[above] at (4.6,.6) {$W^{-1}$};  
    \node[above] at (5.6,.7) {.};    
  \end{tikzpicture}
\end{equation}
Since we do not wish to invert $W$, we do not literally execute the operation depicted on 
the right of \eqref{eq:passthroughs}.  Instead we do the equivalent operation
\begin{center}
  \begin{tikzpicture}[baseline={(current bounding box.center)},scale=1.66,y=-1cm]
    \tikzrotation[black][][$U_i^*$]{3.4}{0.6}
    \tikzrotation[black][][$Z_i^*$]{5.4}{0.6}
    \shiftthroughlr{3.4}{5.4}{0.6} 


    \uppertriangular{3.6}{0.0} 
    \node[above] at (4.6,.6) {$W$};  
    \node[above] at (5.6,.7) {.};    
  \end{tikzpicture}
\end{center}
In fact, the latter operation remains valid, even in the case when $W$ is singular. We
 consider $W^{-1}$ to simplify the description in the next section. 

Since both $R$ and $W$ are unitary-plus-rank-one matrices stored in the factored
form described in Section~\ref{sec:core}, each of the two stages costs two turnovers.
Thus the computational cost of passing a core transformation through $S$ is four turnovers.

\section{The companion QZ algorithm}
\label{sec:companion:QZ:algorithm}
We have implemented both single-shift and double-shift companion QZ algorithms.
For simplicity we will describe only the single-shift case, as the double-shift
iteration is a straightforward extension; we refer to Aurentz et al.\ and
Vandebril and Watkins \cite{AuMaVaWa15,VaWa12}.  The companion QZ algorithm
is easily described by viewing it as a version of the companion QR algorithm
applied to the matrix $VW^{-1}$.  
Clearly
$$VW^{-1} = QRW^{-1} = QS,$$
where $S = RW^{-1}$ is upper triangular. As discussed in Section~\ref{sec:ut} the matrix $W$ does not
need to be invertible and one could as well work on the pair ($R$,$W$) instead which is
equivalent to the classical description of QZ algorithms. There are no issues when
considering  $W$ and we can
rely on standard techniques \cite{b333,Wa75,Wa00}.



Pictorially 
\begin{center}
  \begin{tikzpicture}[scale=1.66,y=-1cm]
    \node[above] at (-1.2,1.2) {$VW^{-1}=$};
    \foreach \j in {0.0,0.2,0.4,...,1.2} {
      \tikzrotation{\j-0.4}{\j}
    }      
    \drawbrace{-0.4}{0.9}{1.7}{$Q$}
    \uppertriangular{1.2}{0.0}
    \drawbrace{1.2}{2.6}{1.7}{$S=RW^{-1}$}
    %
    \node[above] at (3.1,.8) {.};      
  \end{tikzpicture}
\end{center}

To begin the iteration we select a suitable shift $\mu$ and compute $q = (V - \mu W) e_{1}$. 
Only the first two entries of $q$ are nonzero, so we can construct a core transformation $U_{1}$ such that 
$U_{1}^{*}q = \alpha e_{1}$ for some $\alpha$.  Our first modification to $VW^{-1}$ is to apply a similarity transformation
by $U_{1}$:
\begin{center}
  \begin{tikzpicture}[scale=1.66,y=-1cm]
    \tikzrotation[black][][$U_{1}^{*}$]{-.7}{0}
    \tikzrotation[black][][$U_{1}$]{3.0}{0}
     \draw[->] (-.65,0.1) -- (-.5,0.1);  
    \foreach \j in {0.0,0.2,0.4,...,1.2} {
      \tikzrotation{\j-0.4}{\j}
    }      
    \uppertriangular{1.2}{0.0}
    \node[above] at (2.1,.6) {$S$};
    %
    \node[above] at (3.4,.8) {.};      
  \end{tikzpicture}
\end{center}

We can immediately fuse $U_{1}^{*}$ with $Q_{1}$ to make a new $Q_{1}$.  (To keep the notation under control, we 
do not give the modified $Q_{1}$ a new name; we simply call it $Q_{1}$.)  We can also pass $U_{1}$ through $S$
to obtain 
\begin{equation}\label{eq:first_bulge}
  \begin{tikzpicture}[baseline={(current bounding box.center)},scale=1.66,y=-1cm]
    \foreach \j in {0.0,0.2,0.4,...,1.2} {
      \tikzrotation{\j}{\j}
    }  
    \uppertriangular{1.4}{0.0}
    \node[above] at (2.3,.6) {$S$};
    \node[above] at (3.6,.8) {.};      

    \pgfmathsetmacro{\xuone}{-.3}
    \pgfmathsetmacro{\xutwo}{0.4}
    \pgfmathsetmacro{\xuthree}{3.2}
    \pgfmathsetmacro{\xufour}{3.9}
    \pgfmathsetmacro{\xufive}{5.9}

    \tikzrotation{\xutwo}{0}
    \node at (\xutwo,-0.2) [align=center] {$X_{1}$};
    \shiftthroughrl{\xuthree}{\xutwo}{0.0}
    \tikzrotation[black][][$U_{1}$]{3.2}{0}

  \end{tikzpicture}
\end{equation}
The details of passing a core transformation through $S$ where described in Section~\ref{sec:ut}.

If we were to multiply the factors together, we would find that the matrix is no longer upper Hessenberg;
there is a bulge in the Hessenberg form caused by a nonzero entry in position $(3,1)$.   The standard Francis
algorithm chases the bulge until it disappears off the bottom of the matrix.  In our current setting we do not see
a bulge.  Instead we see an extra core transformation $X_{1}$ in \eqref{eq:first_bulge},  which is in fact the cause of 
the bulge.  $X_{1}$ is the \emph{misfit}.  Instead of chasing the bulge, we will chase the 
misfit through the matrix until it disappears
at the bottom.  We therefore call this a \emph{core chasing} algorithm.  

Proceeding from \eqref{eq:first_bulge}, the next step is to do a turnover $Q_{1}Q_{2}X_{1} = U_{2}\hat{Q}_{1}\hat{Q}_{2}$.
Core transformations $\hat{Q}_{1}$ and $\hat{Q}_{2}$ become the new $Q_{1}$ and $Q_{2}$.  Pictorially
\begin{center}
  \begin{tikzpicture}[scale=1.66,y=-1cm]
    \foreach \j in {0.0,0.2,0.4,...,1.2} {
      \tikzrotation{\j}{\j}
    }  
    \uppertriangular{1.4}{0.0}
   \node[above] at (2.3,.6) {$S$};
    \node[above] at (3.2,.8) {.};      
    
    \pgfmathsetmacro{\xuone}{-.2}
    \pgfmathsetmacro{\xutwo}{0.4}
    \pgfmathsetmacro{\xuthree}{3.9}
    
    \tikzrotation[black][][][$X_{1}$]{\xutwo}{0}
    \tikzrotation[black][][${U}_{2}$]{\xuone}{0.2}

    \turnoverrl{\xutwo}{0.0}{\xuone}{0.2}
    

  \end{tikzpicture}
\end{center}
Next we do a similarity transformation, multiplying by $U_{2}^{*}$ on the left and $U_{2}$ on the right.  
This has the effect of moving $U_{2}$ from the left side to the right side of the matrix.  We can also pass
$U_{2}$ through $S$ to obtain
\begin{center}
  \begin{tikzpicture}[scale=1.66,y=-1cm]
    \foreach \j in {0.0,0.2,0.4,...,1.2} { \tikzrotation{\j}{\j} }
    \uppertriangular{1.4}{0.0} 
    \node[above] at (2.3,.6) {$S$}; 
    \node[above] at (3.6,.8) {.};

    \pgfmathsetmacro{\xuone}{-.3} 
    \pgfmathsetmacro{\xutwo}{0.6}
    \pgfmathsetmacro{\xuthree}{3.2} 
    \pgfmathsetmacro{\xufour}{3.9}
    \pgfmathsetmacro{\xufive}{5.9}

    \tikzrotation{\xutwo}{0.2} 
    \node at (\xutwo,0.0) [align=center] {$X_{2}$};
    \shiftthroughrl{\xuthree}{\xutwo}{0.2} 
    \tikzrotation[black][][$U_{2}$]{3.2}{0.2}

  \end{tikzpicture}
\end{center}
Now we are in the same position as we were at \eqref{eq:first_bulge}, except that the misfit
has moved downward one position.  The process continues as before:  
\begin{center}
  \begin{tikzpicture}[scale=1.66,y=-1cm]
    \foreach \j in {0.0,0.2,0.4,...,1.2} {
      \tikzrotation{\j}{\j}
    }  
    \uppertriangular{1.4}{0.0}
    \node[above] at (2.3,.6) {$S$}; 
    \node[above] at (3.8,.8) {.};      
    
    \pgfmathsetmacro{\xuone}{0.0}
    \pgfmathsetmacro{\xutwo}{0.6}
    \pgfmathsetmacro{\xuthree}{3.5}
    \pgfmathsetmacro{\xufour}{1.0}
    
    \tikzrotation[black][][][$X_{2}$]{\xutwo}{0.2}
    \tikzrotation[black][][][$X_{3}$]{\xufour}{0.4}
    \tikzrotation[black][][${U}_{3}$]{\xuone}{0.4}

    \turnoverrl{\xutwo}{0.2}{\xuone}{0.4}
    
    \tikzrotation[black][][$U_{3}$]{\xuthree}{0.4}
    \transferbulgelr{\xuone}{\xuthree}{0.4}
    \shiftthroughrl{\xuthree}{\xufour}{0.4} 

  \end{tikzpicture}
\end{center}

After $n-1$ such steps we arrive at
\begin{center}
  \begin{tikzpicture}[scale=1.66,y=-1cm]
    \foreach \j in {0.0,0.2,0.4,...,1.2} {
      \tikzrotation{\j}{\j}
    }  
    \uppertriangular{1.4}{0.0}
    \node[above] at (2.3,.6) {$S$}; 
    \node[above] at (3.2,.8) {.};      

    \pgfmathsetmacro{\xuone}{1.6}
    \tikzrotation[black][][][$X_{n-1}$]{\xuone}{1.2}
    \path[->,out=180,in=0] (\xuone-0.1,1.3) edge (\xuone-0.35,1.3);
  \end{tikzpicture}
\end{center}
We can now fuse $X_{n-1}$ with $Q_{n-1}$, completing the iteration.  

Exploiting the representation of the factors $W$ and $V$ we get as a total cost for passing
a core transformation through $VW^{-1} =QS$ five turnovers.  The corresponding
cost for companion QR \cite{AuMaVaWa15} is three turnovers, so we expect the companion QZ
code to be slower than the companion QR code by a factor of $5/3$.  During a QR or QZ
iteration the misfit gets passed through the matrix about $n$ times, so the total number
of turnovers is $5n$ for a QZ step and $3n$ for a QR step.  Either way the flop count is
$O(n)$.  Reckoning $O(n)$ total iterations, we get a flop count of $O(n^{2})$ for both
companion $QR$ and $QZ$, with companion $QZ$ expected to be slower by a factor of $5/3$.

In Figure~\ref{fig:runtime} we show execution times for our companion QR and QZ
codes on polynomials of degree from 4 up to about 16000.  We also make a 
comparison with
the code from Boito, Eidelman, and Gemignani (BEGQZ) \cite{BoEiGe14} and the
LAPACK QR and QZ codes ZHSEQR and ZHGEQZ.  Straight lines indicating $O(n^{2})$
and $O(n^{3})$ performance are included for comparison purposes.  Our codes are
the fastest.  The lower plot in the figure corroborates our expectation that
companion QZ will be slower than companion QR by a factor of about $5/3$.  For
polynomials of low degree the LAPACK QR code DHSEQR is roughly as fast as our
codes, but from degree 100 or so we are much faster than all other methods.  The
execution time curves for our companion QR and QZ codes are about parallel to
the $O(n^{2})$ line, indicating $O(n^{2})$ execution time.  The same is true of
the BEGQZ method.

Table~\ref{tab:runtime} shows the execution times for a few selected high degrees.

For this experiment we used a single core of an Intel\textregistered{}
Xeon\textregistered{} CPU E5-2697 v3 running at 2.60GHz with 35 MB shared cache and
128 GB RAM. We used the GNU Compiler Collection gcc version 4.8.5 on an Ubuntu
14.04.1. For the comparison we used LAPACK version 3.7.1.


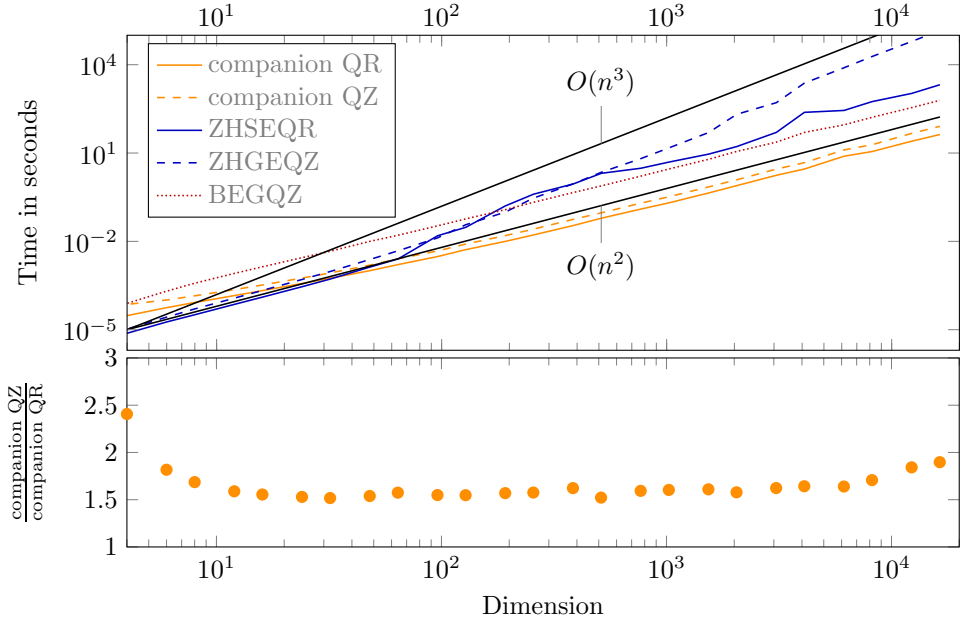
\begin{figure}
  \centering
  \begin{tikzpicture}
    \begin{loglogaxis}[
      name=fig41,
      scale only axis,
      xmin=4, xmax=20000,
      ymin=2e-6, ymax=100000,
      xticklabel pos=right,
      ylabel={Time in seconds},
      every axis y label/.style= {at={(-0.12,0.5)},rotate=90},
      width=0.85\textwidth,
      height=0.20\textheight,
      axis on top,
      legend style = {
        fill opacity = 0.5,      
        draw opacity = 1,
        text opacity = 1,
        cells = {anchor = west},
        legend pos = north west,
        at = {(0.025,0.975)}
      },
      ]

      \addplot[SPECorange, semithick]
      table [x=size,y=runtime]{dat/rt_qr.dat};
  
      \addplot[SPECorange, semithick, dashed]
      table [x=size,y=runtime]{dat/rt_qz.dat};  

      \addplot[SPECblue, semithick]
      table [x=size,y=runtime]{dat/rt_laqr.dat};  

      \addplot[SPECblue, semithick, dashed]
      table [x=size,y=runtime]{dat/rt_laqz.dat};  

      \addplot[SPECred, semithick, densely dotted]
      table [x=size,y=runtime]{dat/rt_begqz.dat};  
      
      \addplot[black, semithick] coordinates{
        (4,1e-5)
        (8,4e-5)
        (16,1.6e-4)
        (32,6.4e-4)
        (64,2.56e-3)
        (128,1.024e-2)
        (256,4.096e-2)
        (512,1.6384e-1)
        (16384,1.6777216e+2)
      };
      \node[coordinate,pin={[pin distance=.50cm]below:{$O(n^{2})$}}]
      at (axis cs:  512 , 1.6584e-1 ){};
      
      \addplot[black, semithick] coordinates{
        (4,1e-5)
        (8,8e-5)
        (16,6.4e-4)
        (32,5.12e-3)
        (64,4.096e-2)
        (128,3.2768e-1)
        (256,2.62144e-0)
        (512,2.097152e+1)
        (16384,6.8719476736e+5)
      };
      \node[coordinate,pin={[pin distance=.50cm]above:{$O(n^{3})$}}]
      at (axis cs:  512 , 2.097152e+1 ){};

      \legend{companion QR, companion QZ, ZHSEQR, ZHGEQZ, BEGQZ};
    \end{loglogaxis}

    \begin{axis}[
      xmode=log,
      at = {($(fig41.south)+(0,-3pt)$)},
      anchor = north,
      scale only axis,
      xmin=4, xmax=20000,
      ymin=1, ymax=3,
      xlabel={Dimension},
      ylabel={$\frac{\text{companion QZ}}{\text{companion QR}}$},
      every axis y label/.style= {at={(-0.12,0.5)},rotate=90},
      width=0.85\textwidth,
      height=0.12\textheight,
      axis on top,
      ]
      
      \addplot[SPECorange, semithick,only marks]%
      coordinates { ( 3 , 5.7682E-05 / 1.7762E-05 ) %
        ( 4 , 7.2266E-05 / 3.0029E-05 ) %
        ( 6 , 1.0183E-04 / 5.6044E-05 ) %
        ( 8 , 1.4160E-04 / 8.3984E-05 ) %
        ( 12 , 2.2930E-04 / 1.4432E-04 ) %
        ( 16 , 3.3105E-04 / 2.1289E-04 ) %
        ( 24 , 5.6745E-04 / 3.7097E-04 ) %
        ( 32 , 8.4766E-04 / 5.5859E-04 ) %
        ( 48 , 1.5572E-03 / 1.0117E-03 ) %
        ( 64 , 2.5586E-03 / 1.6250E-03 ) %
        ( 96 , 4.7294E-03 / 3.0529E-03 ) %
        ( 128 , 8.1016E-03 / 5.2344E-03 ) %
        ( 192 , 1.5659E-02 / 9.9765E-03 ) %
        ( 256 , 2.6047E-02 / 1.6531E-02 ) %
        ( 384 , 5.5786E-02 / 3.4381E-02 ) %
        ( 512 , 9.2938E-02 / 6.1063E-02 ) %
        ( 768 , 1.9890E-01 / 1.2481E-01 ) %
        ( 1024 , 3.2663E-01 / 2.0369E-01 ) %
        ( 1536 , 7.0720E-01 / 4.3930E-01 ) %
        ( 2048 , 1.2331E+00 / 7.8138E-01 ) %
        ( 3072 , 2.8442E+00 / 1.7522E+00 ) %
        ( 4096 , 4.7130E+00 / 2.8695E+00 ) %
        ( 6144 , 1.2774E+01 / 7.7915E+00 ) %
        ( 8192 , 1.9224E+01 / 1.1260E+01 ) %
        ( 12288 , 4.6581E+01 / 2.5283E+01 ) %
        ( 16384 , 8.0829E+01 / 4.2606E+01 ) %
      };
    \end{axis}

  \end{tikzpicture}

\caption{Execution times for several methods on polynomials of degree from 4 up to 16384.  $O(n^{2})$ and 
$O(n^{3})$ lines are included for comparison.}
\label{fig:runtime}
\end{figure}

\begin{table}
\caption{Execution times in seconds for selected high degrees}
\label{tab:runtime}
\begin{center}
  \begin{tabular}{l rrr}
\toprule
degree              & 3072 & 6144 & 12288  \\ \midrule
comp.\ QR & 2 & 8 & 25 \\
comp.\ QZ & 3 & 13 & 47 \\
BEGQZ               & 23 & 91 & 350 \\
    ZHSEQR         & 50 & 280 & 1062 \\
ZHGEQZ         & 514 & 7885 & 61856\\ 
\bottomrule
\end{tabular}
\end{center}
\end{table}

\section{Backward Error Analysis}
\label{sec:stab}

The norm symbol $\norm{\cdot}$ will denote the $2$-norm, i.e.\ the Euclidean 
norm for vectors and the spectral norm for matrices.  These are the norms that we use in our 
backward error analysis.  In our numerical tests we use the Frobenius 
matrix norm $\norm{\cdot}_{F}$ instead of the spectral norm because it is easier to compute.  
In fact the choice of norms is not important to the analysis; we could use other common norms such
as $\norm{\cdot}_{1}$ or $\norm{\cdot}_{\infty}$ with no change in the results.  

In addition we use the following conventions:  $\doteq$ denotes an
 equality where second and higher order terms are dropped,
 $\lesssim$ stands for less than or equal up to a modest
 multiplicative constant typically depending on $n$ as a low-degree polynomial, $\approx$ denotes
 equal up to a modest multiplicative constant.  The symbol $u$ denotes the \emph{unit roundoff}, which
 is about $10^{-16}$ for IEEE binary64 arithmetic.  

Van Dooren and Dewilde \cite{n643} were the first to investigate the backward stability of polynomial
root finding via companion matrices and pencils. 
Edelman and Murakami \cite{m460} revisited this analysis, focusing on scalar polynomials.
 J{\'o}nsson and Vavasis \cite{JoVa04} presented a clear summary of these results.
 
 There are two important measures of backward accuracy when dealing with companions: the
 backward error (i) on the companion matrix or pencil and (ii) on the 
 coefficients of the original polynomial. Let $a = \left[\begin{array}{ccc} a_{0} & \cdots & a_{n}\end{array}\right]^{T}$, 
 the coefficient vector of $p$.  Edelman and Murakami \cite{m460} showed that
 pushing an unstructured error further back from the pencil (or matrix) to the polynomial coefficients
 introduces an additional factor $\norm{a}$ in the backward error.  We will show that we
 can do better since the backward error produced by our companion QR method is highly structured.

\begin{figure}
  \centering
  \begin{tikzpicture}%
  \begin{loglogaxis}[%
    clip mode=individual,
    axis background/.style={fill=white},
    name=plot51a,%
    scale only axis,%
    xmin=1e0, xmax=1e12,%
    ymin=1e-18, ymax=1e+8,%
    xticklabel pos=left,
    xtickmin = 10,
    yticklabel pos=left,
    ylabel={$\|\delta A\|$},
    every axis y label/.style= {at={(-0.25,0.5)},rotate=90},%
    width=0.36\textwidth,%
    height=0.18\textheight,%
    axis on top,%
    ]

    \addplot[SPECred, semithick,only marks, mark=triangle]%
    table [x=normamon,y=LQR||A-Atilde||]{dat/be_0.dat};  
      
    \addplot[black, semithick] coordinates{
      (1e00,1e-14)
      (1e04,1e-10)
      (1e08,1e-06)
      (1e12,1e-02)
    };
    \node[coordinate,pin={[pin distance=.50cm]below:{$\norm{a}$}}]
    at (axis cs:  5e+8 , 5e-6 ){};
    \addplot[black, semithick] coordinates{
      (1e00,1e-14)
      (1e04,1e-06)
      (1e08,1e+02)
      (1e12,1e+10)
    };
    \node[coordinate,pin={[pin distance=.50cm]above:{$\norm{a}^{2}$}}]
    at (axis cs:  1e+7 , 1e-0 ){};
  \end{loglogaxis}

  \begin{loglogaxis}[%
    clip mode=individual,
    axis background/.style={fill=white},%
    name=plot51b,%
    at = {($(plot51a.east)+(+3pt,0)$)},%
    anchor = west,%
    scale only axis,%
    xmin=1e0, xmax=1e12,%
    ymin=1e-18, ymax=1e+8,%
    yticklabel pos=right,%
    ylabel={$\norm{a-\hat{a}}$},%
    every axis y label/.style= {at={(1.25,0.5)},rotate=-90},%
    width=0.36\textwidth,%
    height=0.18\textheight,%
    axis on top,%
    ]
    
    \addplot[SPECblue, semithick,only marks, mark=square]%
    table [x=normamon,y=LQR||a-atilde||]{dat/be_0.dat};  

    \addplot[black, semithick] coordinates{
      (1e00,1e-14)
      (1e04,1e-10)
      (1e08,1e-06)
      (1e12,1e-02)
    };
    \node[coordinate,pin={[pin distance=.50cm]below:{$\norm{a}$}}]
    at (axis cs:  5e+8 , 5e-6 ){};
    \addplot[black, semithick] coordinates{
      (1e00,1e-14)
      (1e04,1e-06)
      (1e08,1e+02)
      (1e12,1e+10)
    };
    \node[coordinate,pin={[pin distance=.50cm]above:{$\norm{a}^{2}$}}]
    at (axis cs:  1e+7 , 1e-0 ){};

  \end{loglogaxis}
  \end{tikzpicture}
  \caption{Backward error on the companion matrix (left) and the coefficient vector (right) as a function of 
    $\norm{a}$ when roots are computed by unstructured LAPACK code.}\label{fig:1} 
\end{figure}
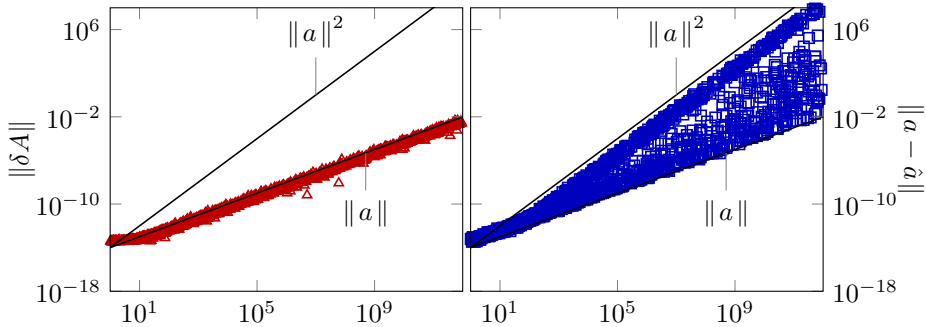
\begin{figure}
  \centering
  \begin{tikzpicture}%

      

  
  \begin{loglogaxis}[%
    clip mode=individual,
    axis background/.style={fill=white},%
    name=plot52b,%
    anchor = west,%
    scale only axis,%
    xmin=1e0, xmax=1e12,%
    ymin=1e-18, ymax=1e+8,%
    yticklabel pos=left,%
    ylabel={$\norm{a-\hat{a}}$},%
    every axis y label/.style= {at={(-0.25,0.5)},rotate=90},%
    width=0.36\textwidth,%
    height=0.18\textheight,%
    axis on top,%
    ]

    \addplot[SPECblue, semithick,only marks, mark=square]%
    table [x=normamon,y=bLQR||a-atilde||]{dat/be_0.dat};  

    \addplot[black, semithick] coordinates{
      (1e00,1e-14)
      (1e04,1e-10)
      (1e08,1e-06)
      (1e12,1e-02)
    };
    \node[coordinate,pin={[pin distance=.50cm]below:{$\norm{a}$}}]
    at (axis cs:  5e+8 , 5e-6 ){};
    \addplot[black, semithick] coordinates{
      (1e00,1e-14)
      (1e04,1e-06)
      (1e08,1e+02)
      (1e12,1e+10)
    };
    \node[coordinate,pin={[pin distance=.50cm]above:{$\norm{a}^{2}$}}]
    at (axis cs:  1e+7 , 1e-0 ){};

  \end{loglogaxis}
  \end{tikzpicture}
\caption{Same experiment as in Figure~\ref{fig:1}, except that the matrix is balanced before the eigenvalue 
computation.  Only the backward error on the polynomial coefficient vector is shown.}\label{fig:2} 
\end{figure}
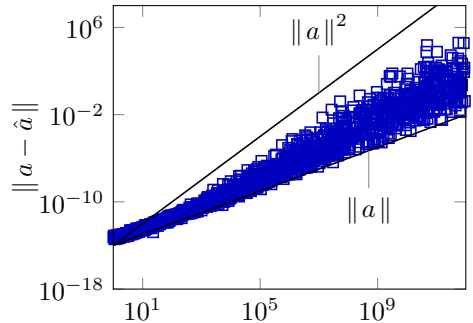

To illustrate what happens in the unstructured case 
we show in Figure~\ref{fig:1} the absolute backward error as a function of $\norm{a}$
when the companion matrix eigenvalue problem is solved  by the 
QR algorithm from LAPACK.  Twelve hundred random polynomials with varying norm 
between $1$ and $10^{12}$ 
are produced by a method described below.  For each sample we plot 
the backward error against $\norm{a}$ (a single point).  Black lines with slopes corresponding to $\norm{a}$ and 
$\norm{a}^{2}$ performance are also provided.  

The graph on the left is the backward error on the companion matrix $A$.  
We see that this grows linearly as a function of $\norm{a}$.  This is consistent with the 
backward stability of the QR algorithm, which guarantees that the computed eigenvalues are the
exact eigenvalues of a slightly perturbed matrix $A + \delta A$ with 
$\norm{\delta A} \lesssim u\norm{A} \approx u\norm{a}$.

The graph on the right shows the backward error on the coefficients of the polynomial as a function
of $\norm{a}$.  Note that the growth is quadratic in $\norm{a}$ in the worst cases, 
consistent with the analysis of Edelman and Murakami \cite{m460}.

In this and all subsequent experiments we used polynomials of varying norm such
that the coefficients within each polynomial have widely varying magnitude.  
Specifically, we produced polynomials with increasing coefficient norm, parametrized by 
an integer $\rho = 1, \dotsc, 12$.
For each $\rho$ we ran 100 experiments. 
We chose polynomials of degree 50, but the results for degree 6,
20, and 200 were very similar. For each of the 51 coefficients of each
polynomial we choose three random numbers $\nu$, $\mu$, and $\eta$, uniformly
distributed in $[0,1]$.  The coefficient is a complex number with 
argument $2\pi\nu$ and  absolute value
$(2\mu-1) 10^{\rho(2\eta - 1)}$. 
Polynomials of similar type have been used by De Ter{\'a}n, Dopico, and P\'erez in \cite{DeDoPe16}.

In this and other experiments for which monic polynomials were needed, we made them monic by 
dividing through by the leading coefficient $a_{n}$.

Balancing \cite{ParRei69} is often touted as an important step in solving the companion eigenvalue
problem.  In Figure~\ref{fig:2} we have repeated the same experiment as in Figure~\ref{fig:1}, except that 
a balancing step is added.  We see that balancing reduces the backward error on the polynomial coefficients.
Later on we will show that our companion QR algorithm (with no balancing) has a 
much better backward error.\footnote{However, a smaller backward error does not necessarily imply more 
accurate roots.  It can happen that the balancing operation improves the condition number of the eigenvalue 
problem enough to more than offset the disadvantage in backward error.  This is a matter for further study.}  
As a step in this direction we begin with some basic facts about the turnover.  

\subsection{Backward Stability of the Turnover}\label{subsec:newturn}

Anyone who has ever tried to program the turnover operation has discovered that there are many ways to do
it, and some are much more accurate than others.   As we have noticed recently, 
even among the ``good'' implementations, some are better than others.

First of all, as has been known for many years, it is crucial to maintain the unitary property of the 
core transformations.  Each time a new core transformation is produced by computation of a $c$ and an $s$,
the unitary property $\absval{c}^{2} + \absval{s}^{2} = 1$ must be enforced by a rescaling operation.   Assuming that $c$ 
and $s$ have been produced by valid formulas, the magnitude of the correction will be extremely tiny, on the order of the 
unit roundoff.  This seemingly trivial correction is crucial.  If it is neglected, the core transformations will gradually
drift away from unitarity over the course of many operations, and the algorithm will fail.   If it is not neglected, the 
programmer has a good chance of producing an accurate turnover.  

It is worth mentioning as an aside that each of these rescaling operations requires a square root, 
but the number whose square root is being 
taken differs from 1 by a very small amount on the order of the unit roundoff.  Therefore we can use the 
Taylor expansion $\sqrt{1+w} = 1 + .5\,w + O(w^{2})$ to compute the square root to 
full precision cheaply or, more conveniently, we can use the formula $1/\sqrt{1+w} = 1 - .5\,w + O(w^{2})$ to 
compute the reciprocal of the square root.  
Since the cost of square root operations contributes significantly to the cost of doing a turnover, 
and both the companion QR and companion QZ algorithms are dominated by turnovers,  this shortcut has a 
significant impact.  

In the data structure for our companion QR code, the core transformations are arranged in descending and ascending 
sequences
$Q = Q_{1} \cdots Q_{n-1}$,  $B = B_{1} \cdots B_{n}$,  and $C^{*} = C_{n}^{*} \cdots C_{1}^{*}$ 
(\ref{eq:factorize:upper:triangular}).  Obviously the last of these is equivalent to the descending sequence
$C = C_{1} \cdots C_{n}$.   In the course of the algorithm these sequences are modified repeatedly by 
turnover operations and (in $Q$ only) occasional fusions.  The following theorem shows that the backward 
errors associated with these modifications are tiny, on the order of the unit roundoff $u$.

\begin{theorem}\label{thm:stableunitary}
Suppose that in the course of the companion QR algorithm the descending sequence $G = G_{1} \cdots G_{n}$
is transformed to $\hat{G}$ by multiplication on the right by $U$ and on the left by $V^{*}$, where each of $U$ and $V$ 
is a product of core transformations.   Then, in floating-point arithmetic
\begin{equation*}
\hat{G} = V^{*}(G + \delta G)U,
\end{equation*}
where  the backward error $\delta G$ satisfies $\norm{\delta G} \lesssim u$.  
\end{theorem}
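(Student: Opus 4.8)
The plan is to break the passage from $G$ to $\hat{G}$ into the elementary turnovers and fusions that the companion QR algorithm performs on the sequence, to control the rounding error committed at each such step, and then to accumulate. Everything rests on one single-step fact: when a turnover (or a fusion) is performed in floating-point arithmetic on core transformations that are \emph{exactly} unitary, then, because the computed $c$'s and $s$'s are rescaled so that $\absval{c}^{2}+\absval{s}^{2}=1$ holds exactly, the output core transformations are again exactly unitary, and the product of the outputs differs from the product of the inputs by a perturbation of norm $\lesssim u$. This is the content of the discussion preceding the theorem; the detailed verification for the improved turnover formulas of \S\ref{subsec:newturn} is the analysis in \cite{TW683}. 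The feature I will use repeatedly is that \emph{every} core transformation produced anywhere in the computation is exactly unitary, so multiplying an error term on either side by a product of such transformations never changes its norm.

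With that in hand I set up the bookkeeping. Write $G=G^{(0)}\to G^{(1)}\to\dotsm\to G^{(N)}=\hat{G}$, where each arrow is one turnover or fusion acting on two or three consecutive core transformations of the current sequence, together with one misfit core transformation that enters on one side; the same operation emits a core transformation on the other side (absent for a fusion), and I collect the entering and emitted transformations into running products $U_{k}$ (on the right) and $V_{k}^{*}$ (on the left), with $U_{0}=V_{0}=I$. For one QR iteration a misfit sweeps the sequence $O(n)$ times, so $N$ is bounded by a low-degree polynomial in $n$. In exact arithmetic each step is an identity, so $G^{(k)}=V_{k}^{*}GU_{k}$; in particular $\hat{G}=V^{*}GU$ with $U=U_{N}$, $V=V_{N}$, which is the exact-arithmetic version of the statement.

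I then claim $\norm{G^{(k)}-V_{k}^{*}GU_{k}}\lesssim u$ for all $k$, by induction on $k$ (the case $k=0$ being trivial). For the inductive step, the single-step fact applied to step $k+1$---with its local error embedded into the full sequence, which costs nothing in norm since the flanking factors and the accumulated $V_{k}^{*}$, $U_{k}$ are exactly unitary---gives a relation of the form $G^{(k)}M_{k+1}=N_{k+1}G^{(k+1)}+E_{k+1}$ (or its mirror image) with $\norm{E_{k+1}}\lesssim u$. Solving for $G^{(k+1)}$, inserting the induction hypothesis for $G^{(k)}$, and using unitary invariance of the norm yields $\norm{G^{(k+1)}-V_{k+1}^{*}GU_{k+1}}\le\norm{G^{(k)}-V_{k}^{*}GU_{k}}+\norm{E_{k+1}}\lesssim (k+1)u$. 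Taking $k=N$ gives $\hat{G}=V^{*}GU+\Delta$ with $\norm{\Delta}\lesssim u$; putting $\delta G=V\Delta U^{*}$, which has the same norm as $\Delta$ because $V$ and $U$ are unitary, this becomes $\hat{G}=V^{*}(G+\delta G)U$, as required.

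The single genuine obstacle is the single-step estimate invoked at the outset: proving that a carefully programmed turnover has an error of absolute size $\lesssim u$---and not, say, $\lesssim u$ times the norm of some ambient quantity, nor something that slowly drifts with the number of operations---requires descending into the turnover formulas and the rescaling trick, and it is precisely the point at which the improved turnover matters. Everything downstream of it is unitary-invariant bookkeeping; the only discipline required is never to let anything but exactly unitary factors flank an error term, so that the accumulation never exceeds the harmless level $O(Nu)$.
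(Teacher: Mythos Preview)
Your proposal is correct and follows essentially the same approach as the paper: both arguments rest on the backward stability of a single turnover/fusion (the paper phrases this as ``matrix multiplication followed by a QR decomposition of a unitary matrix,'' citing \cite{Hig02}) together with unitary invariance of the norm to accumulate the step-by-step errors without amplification. The paper's proof is a terse two-sentence sketch of exactly the induction you spell out in detail.
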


Here we can view $U$ as the product of all core transformations that are absorbed by $G$ on the right via
turnovers (or fusions), and $V$ is the product of all the core transformations that were ejected from $G$ on 
the left.  This is the right view if we are passing core transformations from right to left, but one can equally
well think of passing core transformations from left to right.  Then $V^{*}$ is the product of all the transformations
that are absorbed by $G$ on the left, and $U^{*}$ is the product of the core transformations that are ejected on the 
right.  Either way Theorem~\ref{thm:stableunitary} is valid.

\begin{proof}
Each fusion is just a matrix multiplication operation, and this is backward stable.   Each turnover 
begins with a matrix multiplication operation that forms an essentially $3 \times 3$ unitary matrix.  That matrix is
then refactored by a process that is in fact a $QR$ decomposition operation (on a unitary matrix!).  These operations
are all normwise backward stable \cite{Hig02}.   Combining the small backward errors, and considering that all 
transformations are unitary and therefore do not amplify the errors, we conclude that the combined backward error
$\delta G$ satisfies $\norm{\delta G} \lesssim u$. 
\end{proof}

With Theorem~\ref{thm:stableunitary} in hand we already have enough to build a satisfactory error analysis,
but we can make the analysis stronger (and simpler) 
by taking into account  a certain quantity that is conserved by 
the turnover operation.  In $G= G_{1} \cdots G_{n}$ each core transformation $G_{i}$ has active part
\begin{equation*}
\left[\begin{array}{cc} c_{i} & -s_{i} \\ s_{i} & \phantom{-}\overline{c_{i}}\end{array}\right],
\end{equation*}
where $s_{i}$ is real.\footnote{It is always possible to make the $s_{i}$ real, and that is what we have done in our
codes.  It allows for some simplification, but it is not crucial to the theory.}  The interesting quantity that is preserved is the product $s_{1} \cdots s_{n}$, as
the following theorem shows.  Exact arithmetic is assumed.

\begin{theorem}\label{thm:sprodcons}
Suppose $G = G_{1} \cdots G_{n}$ is transformed to $\hat{G} = \hat{G}_{1} \cdots \hat{G}_{n}$ by turnovers
only (no fusions).   
Then 
\begin{equation*}
\hat{s}_{1}\hat{s}_{2} \cdots \hat{s}_{n} = s_{1} s_{2}\cdots s_{n}.
\end{equation*}
\end{theorem}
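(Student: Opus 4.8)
The plan is to realize the product $s_{1}\cdots s_{n}$ as a single matrix entry of $G$ and then observe that a turnover cannot touch that entry. Regard $G=G_{1}\cdots G_{n}$ as an $(n+1)\times(n+1)$ unitary upper Hessenberg matrix, with $G_{j}$ acting on coordinates $j,j+1$; the entry in question is the top-right corner $e_{1}^{T}Ge_{n+1}$.

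First I would show that $e_{1}^{T}Ge_{n+1}=(-1)^{n}s_{1}\cdots s_{n}$. In the expansion $e_{1}^{T}G_{1}\cdots G_{n}e_{n+1}=\sum (G_{1})_{1,j_{1}}(G_{2})_{j_{1},j_{2}}\cdots(G_{n})_{j_{n-1},n+1}$, the fact that each $G_{k}$ is the identity off coordinates $\{k,k+1\}$ forces the only nonzero term to be the ``staircase'' one $j_{k}=k+1$: a short induction shows $j_{k}\le k+1$ for every $k$, and that if ever $j_{k}=j_{k-1}$ rather than $j_{k}=k+1$ then the index chain is stuck at that value and can never reach $n+1$. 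Since $(G_{k})_{k,k+1}=-s_{k}$, that term equals $\prod_{k=1}^{n}(-s_{k})=(-1)^{n}s_{1}\cdots s_{n}$. (The subdiagonal entries of $G$ are precisely $s_{1},\ldots,s_{n}$, so this is just the statement that the corner entry is, up to the fixed sign $(-1)^{n}$, the product of the subdiagonals of $G$.)

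Next I would argue that a turnover changes $G$ only by multiplying it on the left and right by core transformations that stay away from the boundary. Examining how the misfit is chased through each of the descending sequences that occur (the $Q$, $B$, and $C$ sequences of Sections~\ref{sec:core} and~\ref{sec:companion:QZ:algorithm}), every turnover acting on $G$ absorbs or ejects on the left only core transformations supported on $\{j,j+1\}$ with $j\geq 2$, and on the right only core transformations supported on $\{k,k+1\}$ with $k\leq n-1$; the misfit arrives at coordinate $1$ from the left, or at coordinate $n+1$ from the right, only at the very start or very end of an iteration, and there it is absorbed by a \emph{fusion} (which occurs only in the $Q$ sequence) rather than a turnover. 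Thus, tracking a single turnover, $\hat{G}=V^{*}GU$ where $V$ is a product of core transformations none of which involves coordinate $1$ and $U$ is a product of core transformations none of which involves coordinate $n+1$; hence $e_{1}^{T}V^{*}=e_{1}^{T}$ and $Ue_{n+1}=e_{n+1}$, so
\begin{equation*}
e_{1}^{T}\hat{G}\,e_{n+1}=e_{1}^{T}V^{*}GU\,e_{n+1}=e_{1}^{T}G\,e_{n+1}.
\end{equation*}
Combining this with the first step, $(-1)^{n}\hat{s}_{1}\cdots\hat{s}_{n}=e_{1}^{T}\hat{G}e_{n+1}=e_{1}^{T}Ge_{n+1}=(-1)^{n}s_{1}\cdots s_{n}$, and cancelling the sign proves the claim for one turnover; induction on the number of turnovers finishes it. The hypothesis is genuinely needed: a fusion replaces the two active parts at a position by their product, whose sine is not multiplicatively related to the two it came from, so a fusion does change $s_{1}\cdots s_{n}$.

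I expect the bookkeeping of the second step to be the main obstacle: one must check, separately for the way the misfit is driven through $Q$, through $B$, and through $C$, that no left operation ever touches coordinate $1$ and no right operation ever touches coordinate $n+1$ — equivalently, that the only operations capable of altering the corner entry $e_{1}^{T}Ge_{n+1}$ are the boundary fusions, which the hypothesis excludes. Everything else is routine.
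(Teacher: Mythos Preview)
Your argument is correct and rests on the same device as the paper's—a corner entry of the product equals the sine product and is untouched by a turnover—but the paper works locally rather than globally. A single turnover equates two $3\times 3$ products, $G_jG_{j+1}M_j = M_{j+1}\hat G_j\hat G_{j+1}$; a direct computation of the $(1,3)$ entry of each side gives $s_js_{j+1}$ and $\hat s_j\hat s_{j+1}$ respectively (the misfit factor contributes nothing to that corner because it fixes either the first row or the last column of the $3\times 3$ block), so $s_js_{j+1}=\hat s_j\hat s_{j+1}$, and since the remaining $s_i$ are untouched the full product is preserved. This entirely sidesteps the bookkeeping you flag as the main obstacle. In fact your global step~2 also needs no algorithm-specific inspection of the $Q$, $B$, $C$ chases: a turnover on a descending sequence must involve two adjacent factors $G_j,G_{j+1}$, forcing $1\le j\le n-1$, and then the absorbed and ejected misfits automatically act on coordinates contained in $\{1,\ldots,n\}$ on the right and in $\{2,\ldots,n+1\}$ on the left, giving $e_1^TV^*=e_1^T$ and $Ue_{n+1}=e_{n+1}$ without any reference to how the algorithm drives the misfit.
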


\begin{proof}
It suffices to consider a single turnover.  For the sake of argument we will think of passing a core 
transformation from right to left through $G$, but we could equally well go from left to right.  
Each turnover alters exactly two adjacent core transformations in the descending sequence.  
Before the turnover we have three adjacent cores 
\begin{equation}\label{eq:corebefore}
G_{j}G_{j+1}M_{j} =
  \begin{bmatrix}
   c_{j} & -s_{j}&\\
   s_{j} & \phantom{-}\overline{c}_{j}&\\
   &&1
 \end{bmatrix}
 \begin{bmatrix}
   1&&\\
   &c_{j+1} & -s_{j+1}\\
   &s_{j+1} & \phantom{-}\overline{c}_{j+1}
 \end{bmatrix}
 \begin{bmatrix}
   \gamma_{j} & -\sigma_{j}&\\
   \sigma_{j} & \phantom{-}\overline{\gamma}_{j}&\\
   &&1
 \end{bmatrix}.
 \end{equation}
$G_{j}$ and $G_{j+1}$ belong to the descending sequence, and $M_{j}$ is the misfit coming in from 
the right.  We form the  $3 \times 3$ product, which we then factor into three new core transformations
\begin{equation}\label{eq:coreafter}
M_{j+1}\hat{G}_{j}\hat{G}_{j+1} =
 \begin{bmatrix}
   1&&\\
   &\gamma_{j+1} & -\sigma_{j+1}\\
   &\sigma_{j+1} & \phantom{-}\overline{\gamma}_{j+1}
 \end{bmatrix}
   \begin{bmatrix}
   \hat{c}_{j} & -\hat{s}_{j}&\\
   \hat{s}_{j} & \phantom{-}\overline{\hat{c}}_{j}&\\
   &&1
 \end{bmatrix}
 \begin{bmatrix}
   1&&\\
   &\hat{c}_{j+1} & -\hat{s}_{j+1}\\
   &\hat{s}_{j+1} & \phantom{-}\overline{\hat{c}}_{j+1}
 \end{bmatrix}.
\end{equation}
$M_{j+1}$ will be the new misfit, which is going to be ejected to the left, while $\hat{G}_{j}$ and 
$\hat{G}_{j+1}$ will replace $G_{j}$ and $G_{j+1}$ in the descending sequence.  

It is not hard to work out the products (\ref{eq:corebefore}) and (\ref{eq:coreafter}) explicitly, 
but for our present purposes we just need to compute the  $(1,3)$ entry of each.  
These are easily seen to be $s_{j}s_{j+1}$ 
and $\hat{s}_{j}\hat{s}_{j+1}$, respectively.  Since (\ref{eq:corebefore}) and (\ref{eq:coreafter}) are equal, we conclude 
that $\hat{s}_{j}\hat{s}_{j+1} = s_{j}s_{j+1}$.  
This proves the theorem.    
\end{proof}

We remark that Theorem~\ref{thm:sprodcons} is not applicable to the 
$Q$ sequence, as $Q$ is subjected to fusions. Moreover, $Q$ is the sequence in which we
 search for deflations: a zero $s_i$ is a lucky event signaling a deflation. The
$B$ and $C$ sequences satisfy the conditions of Theorem~\ref{thm:sprodcons} ensuring
thereby that all turnovers are well-defined implying correctness of the algorithm \cite{AuMaVaWa15}.

Next we consider how Theorem~\ref{thm:sprodcons} holds up in floating-point arithmetic.
It turns out that this depends upon how the turnover is implemented.  Using notation from the proof of the 
theorem, suppose we have computed $\hat{s}_{j}$ by some means.  
Then  we can compute $\hat{s}_{j+1}$ by 
\begin{equation}\label{eq:goodsr}
\hat{s}_{j+1} = \frac{s_{j}s_{j+1}}{\hat{s}_{j}}.
\end{equation}
If the turnover uses this formula, the product will be preserved to high relative accuracy.  The multiplication 
and the division will each have a tiny relative error not exceeding $u$, and the computed $\hat{s}_{j}$ and $\hat{s}_{j+1}$
will satisfy $\hat{s}_{j}\hat{s}_{j+1} = s_{j}s_{j+1}(1 + \delta)$, where $\absval{\delta} \lesssim u$.  This proves the following 
theorem.

\begin{theorem}\label{thm:sprodfloat}
Suppose $G = G_{1} \cdots G_{n}$ is transformed to $\hat{G} = \hat{G}_{1} \cdots \hat{G}_{n}$ by turnovers
only (no fusions) using floating point arithmetic.  Suppose that the turnover uses the formula (\ref{eq:goodsr}).
Then 
\begin{equation*}
\hat{s}_{1}\hat{s}_{2} \cdots \hat{s}_{n} = s_{1} s_{2}\cdots s_{n}(1 + \delta),
\end{equation*}
where $\absval{\delta} \lesssim u$.
\end{theorem}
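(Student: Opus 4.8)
The plan is to reduce everything to the analysis of a single turnover and then accumulate, exactly paralleling the proof of Theorem~\ref{thm:sprodcons}. The starting observation is that each turnover alters exactly two \emph{adjacent} core transformations of the descending sequence, say those in positions $j$ and $j+1$ (the misfit is not part of the sequence, so it never enters the product $s_1\cdots s_n$). Thus, writing $\Pi$ for the running product $s_1\cdots s_n$ of the current descending sequence, one turnover multiplies $\Pi$ by the ratio $\rho_k=(\hat{s}_j\hat{s}_{j+1})/(s_j s_{j+1})$, and the claim will follow once I show that in floating point $\rho_k=1+\delta_k$ with $\absval{\delta_k}\lesssim u$, since then $\hat{s}_1\cdots\hat{s}_n=(s_1\cdots s_n)\prod_k\rho_k$.

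The single-turnover estimate is where the hypothesis on the implementation is used. By assumption the turnover first produces $\hat{s}_j$ by some formula, whose accuracy is --- crucially --- irrelevant, and then forms $\hat{s}_{j+1}$ via \eqref{eq:goodsr}, i.e.\ $\hat{s}_{j+1}=s_j s_{j+1}/\hat{s}_j$. Applying the standard rounding model $\float{x\circ y}=(x\circ y)(1+\epsilon)$, $\absval{\epsilon}\le u$, to the multiplication and the division in \eqref{eq:goodsr}, the stored value satisfies $\hat{s}_{j+1}=s_j s_{j+1}(1+\epsilon_1)(1+\epsilon_2)/\hat{s}_j$ with $\absval{\epsilon_1},\absval{\epsilon_2}\le u$. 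Multiplying by $\hat{s}_j$, the unknown denominator cancels exactly, so $\hat{s}_j\hat{s}_{j+1}=s_j s_{j+1}(1+\delta_k)$ with $\absval{\delta_k}=\absval{\epsilon_1+\epsilon_2+\epsilon_1\epsilon_2}\le 2u+u^2$. In exact arithmetic this is precisely the identity $\hat{s}_j\hat{s}_{j+1}=s_j s_{j+1}$ of Theorem~\ref{thm:sprodcons}, both sides being the common $(1,3)$ entry of \eqref{eq:corebefore} and \eqref{eq:coreafter}; here \eqref{eq:goodsr} is exactly the device that makes that entry insensitive to the error in $\hat{s}_j$ and to any error committed in forming the underlying $3\times 3$ product. (One may assume $\hat{s}_j\neq 0$, since $\hat{s}_j=0$ would force $s_j s_{j+1}=0$, a deflation, which does not arise in the $B$ and $C$ sequences to which the theorem is applied.)

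It remains to accumulate. Let $N$ be the total number of turnovers used in passing from $G$ to $\hat{G}$; in both companion QR and companion QZ this is at most a low-degree polynomial in $n$ (order $n$ turnovers per Francis iteration, order $n$ iterations). Chaining the per-turnover identities gives $\hat{s}_1\cdots\hat{s}_n=(s_1\cdots s_n)\prod_{k=1}^N(1+\delta_k)=(s_1\cdots s_n)(1+\delta)$, and $\absval{\delta}\le\prod_{k=1}^N(1+\absval{\delta_k})-1\le(1+2u+u^2)^N-1\doteq 2Nu\lesssim u$, the last bound absorbing the polynomial-in-$n$ factor $N$ into $\lesssim$.

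There is no deep obstacle: the argument is routine once the single-turnover bound is in place. The one point needing care, and what distinguishes this from Theorem~\ref{thm:sprodcons}, is recognizing why errors in $\hat{s}_j$ do not propagate into the product --- they are diverted entirely into $\hat{s}_{j+1}$ by \eqref{eq:goodsr} and then annihilated --- so the only surviving errors are the two roundings of \eqref{eq:goodsr} itself. The bookkeeping around \emph{current} versus original rotation parameters, and the verification that each turnover touches only the two advertised entries of the descending sequence, are the only other things to check.
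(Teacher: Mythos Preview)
Your proposal is correct and follows essentially the same approach as the paper: the paper's proof is the short paragraph immediately preceding the theorem, which observes that the multiplication and division in \eqref{eq:goodsr} each incur a relative error at most $u$, so $\hat{s}_{j}\hat{s}_{j+1} = s_{j}s_{j+1}(1+\delta)$ with $\absval{\delta}\lesssim u$, and then declares the theorem proved. You have simply filled in the accumulation over the $N$ turnovers explicitly and noted that the polynomial-in-$n$ factor is absorbed by the $\lesssim$ convention, which the paper leaves implicit.
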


In words, the product $s_{1} \cdots s_{n}$ is preserved to high relative accuracy.  
The results presented in this paper were obtained using a (new!) turnover that uses the formula
(\ref{eq:goodsr}), so we will be able to use Theorem~\ref{thm:sprodfloat}.

The good results in our publication \cite{AuMaVaWa15} were obtained using a turnover (the \emph{old}
turnover) that did not
use (\ref{eq:goodsr}) and violated Theorem~\ref{thm:sprodfloat}.   That turnover preserves $s_{1} \cdots s_{n}$
to high absolute accuracy but not to high relative accuracy.   A satisfactory backward error analysis is still possible, 
and we did one.  We have preserved that analysis in the technical report \cite{TW683}.  We were about to submit 
it for publication when we realized that a small change in the turnover yields more accurate results and a stronger
and simpler backward error analysis.  That is what we are presenting here.

\subsection{Backward error on the companion matrix}

We consider the companion QR algorithm (with the improved turnover) first, leaving companion QZ 
for later.  We start with the backward error on the companion matrix.
We will fix the error in the analysis of \cite{AuMaVaWa15} and make other 
substantial improvements.    We will take $p$ to be monic with coefficient vector 
$a = \left[\begin{array}{cccccc} a_{0} & \cdots & a_{n-1} & 1\end{array}\right]^{T}$.  The companion
matrix is 
\begin{align}
  A = \begin{bmatrix}
    &        &   & -a_{0} \\
    1 &        &   & -a_{1} \\
    & \ddots &   & \vdots \\
    &        & 1 & -a_{n-1}
  \end{bmatrix}.
\label{eq:moniccompanionmatrix}
\end{align} 
Clearly $1 \leq \norm{a} \approx \norm{A}$.

When we run the companion QR algorithm on $A$, 
we transform it to $U^{*}AU$, where $U$ is the product of all of the core transformations that took part in similarity transformations.  At the same time $Q$ and $R$ are transformed to 
$U^{*}QX$ and $X^{*}RU$, where $X$ is the product of all core transformations that were ejected from $R$ and 
absorbed by $Q$.  (Similarly we can view $U$ as the product of all core transformations that were ejected from 
$Q$ and absorbed by $R$.)

In floating-point arithmetic we have 
$$\hat{A} = U^{*}(A + \delta A)U.$$
where $\delta A$ is the backward error.   The roots that we compute are
exactly the eigenvalues of $\hat{A}$ and of $A + \delta A$.  We would like to get a bound on $\norm{\delta A}$.

We begin by looking at the backward error on $R$, and to this end we consider first the larger matrix 
$\underline{R}$  \eqref{eq:bigr}, which we can write in the factored form \eqref{eq:factorize:upper:triangular}.  

$$\underline{\hat{R}} = \underline{X}^{*}(\underline{R} + \underline{\delta R})\underline{U},$$
where $\underline{U} = \diagg{U,1}$ and $\underline{X} = \diagg{X,1}$.  
Consider the unitary and rank-one parts separately:   $\underline{R} = \underline{R}_{u} + \underline{R}_{o}$, where
$\underline{R}_{u} = C^{*}B$ and $\underline{R}_{o} = \alpha\, C^{*}e_{1}\underline{y}^{T} = 
\alpha\,\underline{x}\,\underline{y}^{T}$.  Here we have introduced a new symbol $\underline{x} = C^{*}e_{1} = 
\alpha^{-1}\underline{z}$.  We note that $\norm{\underline{x}} = \norm{\underline{y}} = 1$, and 
$\absval{\alpha} = \norm{\underline{z}} = \norm{a}$ in this (monic) case. 
We will determine backward errors
associated with these two parts:  $\underline{\delta R} = \underline{\delta R}_{u} + \underline{\delta R}_{o}$.
 
Since $\underline{R}_{u} = C^{*}B$, we have 
$\underline{R}_{u} + \underline{\delta R}_{u} = (C + \delta C)^{*}(B + \delta B)$, where 
$\norm{\delta B} \lesssim u$ and $\norm{\delta C} \lesssim u$ by Theorem~\ref{thm:stableunitary}.
We deduce that $\underline{\delta R}_{u} \doteq 
\delta C^{*}B + C^{*}\delta B$, and $\norm{\underline{\delta R}_{u}} \lesssim u$.

For the rank-one part recall from \eqref{eq:yrecover} that
$$\alpha\underline{y}^{T} = - \rho^{-1}e_{n+1}^{T}\underline{R}_{u},\quad \mbox{where} \quad 
\rho = e_{n+1}^{T}C^{*}e_{1}.$$
It is not hard to show that the quantity $\rho$ remains invariant under QR iterations in exact arithmetic.
Our mistake in the backward error analysis in \cite{AuMaVaWa15} was to treat it as a constant, when in 
fact we should have taken the error into account.  The simplest way to do this is to note that 
$$\hat{\rho} = \rho + \delta \rho = e_{n+1}^{T}(C + \delta C)^{*}e_{1},$$
so $\delta \rho = e_{n+1}^{T}\delta C^{*}e_{1}$, and $\absval{\delta \rho} \lesssim u$.  This computation, 
which shows that $\delta \rho$ is small in an absolute sense but not necessarily small relative to $\rho$, 
is valid for both the old and the new turnover.   

With our new turnover we can get a better result:  Let  
\begin{displaymath}
\left[\begin{array}{cc} c_{i} & -s_{i} \\ s_{i} & \phantom{-}\overline{c}_{i}\end{array}\right] \quad i=1,\,\ldots,\,n
\end{displaymath}
denote the active parts of the core transformations in the descending sequence $C = C_{1}\cdots C_{n}$.  
A straightforward computation shows that $\rho = e_{n+1}C^{*}e_{1} = (-1)^{n}s_{1} \cdots s_{n}$.  Therefore
we can invoke Theorem~\ref{thm:sprodfloat} to deduce that the error in $\rho$ is tiny relative to $\rho$:
\begin{equation}\label{eq:rhobackerr}
\hat{\rho} = \rho(1 + \delta\rho_{r}) \quad\mbox{where } \absval{\delta\rho_{r}} \lesssim u.
\end{equation}
This result enables a stronger and easier error analysis.  

Another straightforward computation shows that  $-\rho^{-1} = \alpha$. Indeed, 
$$\rho = e_{n+1}^{T}C^{*}e_{1} = 
e_{n+1}^{T}\underline{x} = x_{n+1} = \alpha^{-1}z_{n+1} = -\alpha^{-1},$$
with $\underline{z}$ defined in \eqref{eq:uxform}.  
Thus $\underline{y}^{T} = e_{n+1}^{T}\underline{R}_{u}$ and
$$\underline{R}_{o} = \alpha\,\underline{x}\,\underline{y}^{T} = \alpha\,C^{*}e_{1}e_{n+1}^{T}\underline{R}_{u}.$$

The backward error in 
$\underline{R}_{o}$ is given by 
\begin{equation*}
\underline{R}_{o} + \underline{\delta R}_{o}  = 
\alpha(1 + \delta\rho_{r})^{-1}(C + \delta C)^{*}e_{1}e_{n+1}^{T}(\underline{R}_{u} + \underline{\delta R}_{u})
\end{equation*}
Making the approximation $(1 + \delta\rho_{r})^{-1} \doteq 1 - \delta\rho_{r}$, we get
\begin{equation}
\underline{R}_{o} + \underline{\delta R}_{o}  = 
\alpha(\underline{x} + \underline{\delta x})(\underline{y} + \underline{\delta y})^{T}, \label{eq:bigrpert}
\end{equation}
where $\underline{\delta x} \doteq (\delta C^{*} - \delta\rho_{r}C^{*})e_{1}$ and 
$\underline{\delta y}^{T} = e_{n+1}^{T}\underline{\delta R}_{u}$.  Note that  $\norm{\underline{\delta x}} \lesssim u$ and $\norm{\underline{\delta y}} \lesssim u$.  

Recall that $\absval{\alpha} = \norm{a} = \norm{\underline{z}} = \norm{\underline{R}_{o}} \approx \norm{R} = \norm{A}$,  
and the approximation is excellent when $\norm{A}$ is large. 
Thus we will use the approximation $\absval{\alpha} = \norm{a} \approx \norm{A}$
without further comment.  

\begin{theorem}\label{thm:backstabqr1}
If the companion eigenvalue problem is solved by our
companion QR algorithm, then 
\begin{enumerate}
\item[(a)] $\hat{A} = U^{*}(A + \delta A)U$, where $\norm{\delta A} \lesssim u \norm{A}$.
\item[(b)] $\hat{Q} = U^{*}(Q + \delta Q)X$, where $\norm{\delta Q} \lesssim u$.
\item[(c)] $\hat{R} = X^{*}(R + \delta R)U$, where $\norm{\delta R} \lesssim u \norm{A}$.
\item[(d)] More precisely, 
$$\delta R \doteq \delta R_{u} + \alpha(\delta x\,y^{T} + x\,\delta y^{T} ),$$ 
where 
$\norm{\delta R_{u}} \lesssim u$,  $\norm{\delta x} \lesssim u$, and $\norm{\delta y} \lesssim u$.  
\end{enumerate}
\end{theorem}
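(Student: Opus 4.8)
The plan is to obtain parts (b), (c), (d) directly from the estimates assembled above, and then to read off (a) by multiplying the factored forms together. Here $U$, the core transformations $X$ that migrate from $R$ into $Q$, and the perturbations $\delta Q$, $\delta R$ are exactly the objects produced by running the algorithm. Part (b) is immediate: over the course of the iterations the descending sequence $Q = Q_{1}\cdots Q_{n-1}$ is altered only by turnovers and fusions, whose net effect is a right multiplication by $X$ and a left multiplication by $U^{*}$, so Theorem~\ref{thm:stableunitary} gives $\hat{Q} = U^{*}(Q + \delta Q)X$ with $\norm{\delta Q} \lesssim u$.

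For (d) the work is essentially bookkeeping. Write $\underline{R} = \underline{R}_{u} + \underline{R}_{o}$ and correspondingly $\underline{\delta R} = \underline{\delta R}_{u} + \underline{\delta R}_{o}$. The unitary part has already been treated: Theorem~\ref{thm:stableunitary} applied to $B$ and $C$ gives $\underline{\delta R}_{u} \doteq \delta C^{*}B + C^{*}\delta B$ with $\norm{\underline{\delta R}_{u}} \lesssim u$. For the rank-one part I would use \eqref{eq:rhobackerr} --- which rests on Theorem~\ref{thm:sprodfloat} together with $\rho = (-1)^{n}s_{1}\cdots s_{n}$ --- so that $\rho$, and hence $\alpha = -\rho^{-1}$, carries only a tiny \emph{relative} error; expanding \eqref{eq:bigrpert} to first order then gives $\underline{\delta R}_{o} \doteq \alpha(\underline{\delta x}\,\underline{y}^{T} + \underline{x}\,\underline{\delta y}^{T})$ with $\norm{\underline{\delta x}} \lesssim u$ and $\norm{\underline{\delta y}} \lesssim u$. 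It remains to descend from the $(n+1)\times(n+1)$ level to the $n\times n$ level: using $R = P^{T}\underline{R}P$, $\underline{U} = \diagg{U,1}$, $\underline{X} = \diagg{X,1}$, together with the identities $P^{T}\underline{X}^{*} = X^{*}P^{T}$ and $\underline{U}P = PU$, one gets $\hat{R} = X^{*}(R + \delta R)U$ with $\delta R = P^{T}\,\underline{\delta R}\,P \doteq \delta R_{u} + \alpha(\delta x\,y^{T} + x\,\delta y^{T})$, each error term of norm $\lesssim u$ and $\norm{x}, \norm{y} \le 1$. Part (c) then follows: $\norm{\delta R} \le \norm{\delta R_{u}} + \absval{\alpha}\,(\norm{\delta x}\,\norm{y} + \norm{x}\,\norm{\delta y}) \lesssim u + u\,\absval{\alpha} \approx u\,\norm{A}$, using $\absval{\alpha} = \norm{a} \approx \norm{A} \ge 1$.

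For (a), the computed roots are the eigenvalues of $\hat{A} = \hat{Q}\hat{R}$; substituting (b) and (c) and using $XX^{*} = I$,
\[
\hat{A} = U^{*}(Q + \delta Q)(R + \delta R)U \doteq U^{*}\bigl(A + \delta Q\,R + Q\,\delta R\bigr)U = U^{*}(A + \delta A)U,
\]
with $\delta A \doteq \delta Q\,R + Q\,\delta R$. Since $Q$ is unitary, $\norm{R} = \norm{A}$, whence $\norm{\delta A} \le \norm{\delta Q}\,\norm{A} + \norm{\delta R} \lesssim u\,\norm{A}$. The routine ingredients here are the $2$-norm estimates (unitary invariance) and the single quantitative input $\absval{\alpha} = \norm{a} \approx \norm{A}$. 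I expect the only genuinely delicate point to be the descent from $\underline{R}$ to $R$ in part (d): one must know that the unitary-plus-rank-one structure and the zero last row of $\underline{R}$ are preserved throughout the algorithm, so that $P^{T}\,\underline{\delta R}\,P$ really is a backward error for $R$ of exactly the advertised structured shape, and that the accumulated transformations commute past $P$ and $P^{T}$ as claimed --- which is precisely the role of the extra row and column and of the structural theory of \cite[\S~4]{AuMaVaWa15}.
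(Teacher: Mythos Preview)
Your proposal is correct and follows essentially the same route as the paper's proof: part (b) from Theorem~\ref{thm:stableunitary}, part (d) by expanding \eqref{eq:bigrpert} to first order and projecting via $P^{T}(\cdot)P$, part (c) from the factor $\alpha$ in (d), and part (a) from $\delta A \doteq \delta Q\,R + Q\,\delta R$. You are somewhat more explicit than the paper about the descent $\underline{R}\to R$ and the cancellation $XX^{*}=I$, but these are exactly the implicit steps underlying the paper's brief argument.
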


\begin{proof}  The work is mostly done; let's  start with part (d).  A first-order expansion of  (\ref{eq:bigrpert}) gives 
$\underline{\delta R}_{o} \doteq \alpha (\underline{\delta x}\underline{y}^{T} + \underline{x}\underline{\delta y}^{T})$, 
so 
\begin{equation*}
\delta R_{o} = P^{T}\underline{\delta R}_{o}P = \alpha( \delta x\,y^{T} + x\,\delta{y}^{T}),
\end{equation*}
where $\delta x = P^{T}\underline{\delta x}$ and so on.  This establishes (d).  

The factor $\alpha$ in the expression for $\delta R_{o}$ implies that $\norm{\delta R} \lesssim u\,\norm{a} \approx u\,\norm{A}$, so (c) holds.  Part (b) is true by Theorem~\ref{thm:stableunitary}. 
Part (a) follows from (b) and (c) because $\delta A \doteq \delta Q\,R + Q\,\delta R$.
\end{proof}

\begin{example}\label{examp:backstabqr1}
As a numerical test of Theorem~\ref{thm:backstabqr1} we computed the backward error on the companion
matrix as follows.  During the companion QR iterations we accumulated the transforming matrix $U$.  
We then computed $\check{A} = U\hat{A}U^{*}$.  The backward error on $A$ is $\delta{A} = \check{A} - A$.
Figure~\ref{fig:backstabqr1} shows that the backward error on $A$
grows in proportion to $\norm{A}$ as claimed.
\end{example}

\begin{figure}
  \centering
  \begin{tikzpicture}%
  \begin{loglogaxis}[%
    clip mode=individual,
    axis background/.style={fill=white},
    name=plot53a,%
    scale only axis,%
    xmin=1e0, xmax=1e12,%
    ymin=1e-18, ymax=1e+8,%
    xticklabel pos=left,
    xtickmin = 10,
    yticklabel pos=left,
    ylabel={$\|\delta A\|$},%
    every axis y label/.style= {at={(-0.25,0.5)},rotate=90},%
    width=0.36\textwidth,%
    height=0.18\textheight,%
    axis on top,%
    ]


    \addplot[SPECred, semithick,only marks, mark=triangle]%
    table [x=normamon,y=QR||A-Atilde||b]{dat/be_0.dat};  

    \addplot[black, semithick] coordinates{
      (1e00,1e-14)
      (1e04,1e-10)
      (1e08,1e-06)
      (1e12,1e-02)
    };
    \node[coordinate,pin={[pin distance=.50cm]below:{$\norm{a}$}}]
    at (axis cs:  5e+8 , 5e-6 ){};
    \addplot[black, semithick] coordinates{
      (1e00,1e-14)
      (1e04,1e-06)
      (1e08,1e+02)
      (1e12,1e+10)
    };
    \node[coordinate,pin={[pin distance=.50cm]above:{$\norm{a}^{2}$}}]
    at (axis cs:  1e+7 , 1e-0 ){};

  \end{loglogaxis}
  \end{tikzpicture}
  
  \caption{Companion QR method; norm of the backward error on the companion
    matrix $A$ plotted against $\norm{a}$}
  \label{fig:backstabqr1}
\end{figure}
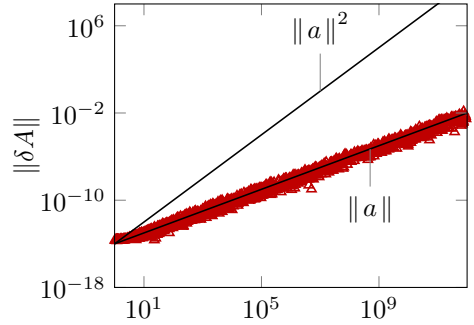

\subsection{Backward error on the polynomial coefficients}

We continue to study the backward error of the companion QR algorithm, 
leaving the companion QZ case for later.   The monic polynomial 
\begin{displaymath}
p(\lambda) = a_{0} + a_{1}\lambda +a_{2}\lambda^{2} + \cdots + a_{n-1}\lambda^{n-1} + \lambda^{n}
\end{displaymath}
is associated with the coefficient vector 
\begin{displaymath}
a = \left[\begin{array}{cccc} a_{0} & \cdots & a_{n-1} & 1\end{array}\right]^{T}\in \cplxs^{n+1}.
\end{displaymath}
 When we compute the zeros of $p$, we don't get the exact zeros, but we hope that they are the 
zeros of a ``nearby'' polynomial.  Let $\lambda_{1}$, \ldots, $\lambda_{n}$ denote the computed zeros
and 
\begin{displaymath}
\tilde{p}(\lambda)  = (\lambda - \lambda_{1}) \cdots (\lambda - \lambda_{n}).
\end{displaymath}
This is the monic polynomial that has the computed roots as its zeros.  We can compute the 
coefficients of $\tilde{p}$:
\begin{displaymath}
\tilde{p}(\lambda) = 
\tilde{a}_{0} + \tilde{a}_{1}\lambda +\tilde{a}_{2}\lambda^{2} + \cdots + \tilde{a}_{n-1}\lambda^{n-1} + \lambda^{n}
\end{displaymath}
and the corresponding coefficient vector
\begin{displaymath}
\tilde{a} = \left[\begin{array}{cccc} \tilde{a}_{0} & \cdots & \tilde{a}_{n-1} & 1\end{array}\right]^{T}\in \cplxs^{n+1}.
\end{displaymath}
This computation is done in multiple precision arithmetic using the multi
precision engine of MPSolve 3.1.5 \cite{BinLR14}.  The quantity
$\delta{a} = \tilde{a}-a$ is the backward error on the coefficients.  We would
like to show that $\norm{\delta a}$ is tiny.

Theorem~\ref{thm:backstabqr1} shows that the norm of the backward error on the companion matrix $A$
is directly proportional to the norm of the matrix.  According to 
Edelman and Murakami \cite{m460}, the backward error on the polynomial coefficients should then grow 
like the square of the norm, that is, $\norm{\delta a} \lesssim u\,\norm{a}^{2}$.
In this section we show that we can do better:  
by exploiting the structure of the problem, we can make an argument that 
shows that the backward error on the polynomial depends linearly on the 
norm:  $\norm{\delta a} \lesssim u\,\norm{a}$.   
This is an optimal result and is better than what is achieved by LAPACK's unstructured QR algorithm or 
Matlab's \texttt{roots} command.

Now let's get started on the analysis.  
In the factorization $A=QR$,  the triangular factor can be written as $R = I + (\alpha\,x - e_{n})y^{T}$, using notation
established earlier in this section.  Thus
\begin{equation}\label{eq:aqperturb}
\lambda I - A = (\lambda I - Q) - Q(\alpha\,x - e_{n})y^{T}.
\end{equation}
To make use of this equation 
we need the following known result \cite[p.\ 26]{HorJoh13}.  
\begin{lemma}\label{lem:detadj}
Let $K\in\cnxn$, and let $w$, $v \in \cn$.  Then
$$\det(K + w\,v^{T}) = \det(K) + v^{T}\adj(K)w,$$
where $\adj(K)$ denotes the adjugate matrix of $K$.
\end{lemma}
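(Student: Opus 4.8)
The plan is to prove the identity first under the extra hypothesis that $K$ is invertible, and then remove that hypothesis by a density argument. For invertible $K$, I would write
\begin{equation*}
\det(K + w\,v^{T}) = \det(K)\,\det(I + K^{-1}w\,v^{T}),
\end{equation*}
and then invoke the elementary fact that $\det(I + a\,b^{T}) = 1 + b^{T}a$ for all $a,b\in\cn$. That sub-fact is quick to justify any number of ways (e.g.\ $a\,b^{T}$ has rank at most one, so its only possibly nonzero eigenvalue is $b^{T}a$, whence the eigenvalues of $I + a\,b^{T}$ are $1+b^{T}a$ together with $n-1$ copies of $1$, and the determinant is their product; alternatively, compute $\det\!\left[\begin{smallmatrix} I & -a \\ b^{T} & 1\end{smallmatrix}\right]$ two ways via Schur complements). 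Applying it with $a = K^{-1}w$ and $b = v$ gives $\det(K + w\,v^{T}) = \det(K)\,(1 + v^{T}K^{-1}w) = \det(K) + v^{T}\!\left(\det(K)\,K^{-1}\right)w$, and since $\det(K)\,K^{-1} = \adj(K)$ when $K$ is invertible, this is exactly $\det(K) + v^{T}\adj(K)\,w$.

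To remove the invertibility assumption, I would observe that, for fixed $w$ and $v$, both sides of the asserted identity are polynomial — hence continuous — functions of the $n^{2}$ entries of $K$: the left-hand side is a determinant, and each entry of $\adj(K)$ is (up to sign) an $(n-1)\times(n-1)$ minor of $K$. Two polynomials that agree on the set of invertible matrices, which is dense in $\cnxn$ because its complement is the zero set of the single polynomial $\det$, agree identically. This extends the identity to all $K\in\cnxn$, which is what the lemma claims.

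I expect the only genuinely delicate point to be the singular case, and the density argument disposes of it cleanly; the invertible case is essentially bookkeeping. As a fully self-contained alternative that avoids the case split, one can instead expand $\det(K + w\,v^{T})$ directly by multilinearity in the columns: writing $K = [\,k_{1}\ \cdots\ k_{n}\,]$, the $j$-th column of $K + w\,v^{T}$ is $k_{j} + v_{j}w$, so column-by-column expansion yields $2^{n}$ terms, of which every term selecting $w$ in two or more columns vanishes (two proportional columns); what remains is $\det(K) + \sum_{j} v_{j}\det(k_{1},\dots,k_{j-1},w,k_{j+1},\dots,k_{n})$, and expanding each of the latter determinants along its $j$-th column in terms of cofactors reassembles precisely $v^{T}\adj(K)\,w$. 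The one thing to be careful about in that route is matching the cofactor-versus-adjugate transpose convention. Since the lemma is standard, I would present the invertible-case-plus-density proof as the main line and relegate the multilinear computation to a remark (or simply cite the reference already given).
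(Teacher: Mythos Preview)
Your proof is correct. The paper does not actually prove this lemma; it states it as a known result and cites Horn and Johnson, \emph{Matrix Analysis}, so there is nothing to compare against beyond that reference. Your invertible-case-plus-density argument is one of the standard proofs, and the multilinear alternative you sketch is equally valid; either would be appropriate here.
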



\begin{corollary}\label{cor:charpolform}
If $A = QR = Q(I + (\alpha\,x-e_{n})y^{T})$, then
$$\det(\lambda I - A) = \det(\lambda I - Q) - y^{T}\adj(\lambda I - Q)Q(\alpha\,x - e_{n}).$$  
\end{corollary}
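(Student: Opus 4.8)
The plan is to read the identity off directly from Lemma~\ref{lem:detadj}. The factorization $A = QR$ together with the form $R = I + (\alpha\,x - e_{n})y^{T}$ established earlier in the section (cf.\ \eqref{eq:aqperturb}) gives $A = Q + Q(\alpha\,x - e_{n})y^{T}$, and therefore
\[
\lambda I - A = (\lambda I - Q) - Q(\alpha\,x - e_{n})y^{T}.
\]
So $\lambda I - A$ has the shape $K + w\,v^{T}$ with $K = \lambda I - Q \in \cnxn$, $w = -Q(\alpha\,x - e_{n}) \in \cn$, and $v = y \in \cn$.

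With this identification in hand, I would simply apply Lemma~\ref{lem:detadj} to obtain
\[
\det(\lambda I - A) = \det(K) + v^{T}\adj(K)\,w = \det(\lambda I - Q) - y^{T}\adj(\lambda I - Q)\,Q(\alpha\,x - e_{n}),
\]
which is exactly the assertion of the corollary.

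I do not anticipate any real obstacle: the statement is a one-line specialization of Lemma~\ref{lem:detadj}, and the only point requiring a moment's care is the sign, which enters because the rank-one term in $\lambda I - A$ is subtracted, so $w = -Q(\alpha\,x - e_{n})$ rather than $Q(\alpha\,x - e_{n})$. All of the substance is contained in Lemma~\ref{lem:detadj} itself and in the earlier observation that the triangular factor $R$ differs from the identity by a rank-one matrix; the corollary merely records the resulting closed form for the characteristic polynomial of $A$ in terms of the characteristic polynomial and adjugate of the unitary Hessenberg factor $Q$, which is the shape the subsequent backward error analysis will exploit.
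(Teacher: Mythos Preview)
Your proposal is correct and is exactly the intended argument: the paper states the corollary without proof, since it follows immediately from Lemma~\ref{lem:detadj} applied to the decomposition \eqref{eq:aqperturb}, precisely as you have written. The only care needed is the sign of the rank-one term, which you have handled correctly.
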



The entries of the adjugate matrix are determinants of order $n-1$, so $\adj(\lambda I - Q)$ is a matrix
polynomial of degree $n-1$:
\begin{equation}\label{eq:adjpol}
\adj(\lambda I - Q) = \sum_{k=0}^{n} G_{k} \lambda^{k},
\end{equation}
with $\norm{G}_{k} \approx 1$ for $k=1$, \ldots, $n-1$, and $G_{n} = 0$.

The characteristic polynomial of $Q$ is 
$\det(\lambda I - Q) = \lambda^{n}-1$, which we will also write as
\begin{displaymath}
\det(\lambda I - Q) = \sum_{k=0}^{n} q_{k} \lambda^{k},
\end{displaymath}
with $q_{n} = 1$, $q_{0} = -1$, and $q_{k}=0$ otherwise.
Using Corollary~\ref{cor:charpolform} and (\ref{eq:adjpol}) we can write the characteristic polynomial of $A$ as 
\begin{equation}\label{eq:charpolform}
p(\lambda) = \det(\lambda I - A) = \sum_{k=0}^{n} \left[q_{k} - y^{T}G_{k}Q(\alpha\, x - e_{n})\right] \lambda^{k}.
\end{equation}

The roots that we actually compute are the zeros of a perturbed polynomial
\begin{displaymath}
\tilde{p}(\lambda) = \det(\lambda I - (A + \delta A)) = \sum_{k=0}^{n} (a_{k} + \delta a_{k}) \lambda^{k}.
\end{displaymath}  
Our plan now is to use (\ref{eq:charpolform}) to determine the effect of the perturbation $\delta A$
on the coefficients of the characteristic polynomial.   That is, we want bounds on $\absval{\delta a_{k}}$.
 
\begin{lemma}\label{lem:adjupert}
If $\norm{\delta Q} \lesssim u$ then
\begin{equation}\label{eq:adjupert}
\adj(\lambda I - (Q + \delta Q)) = \sum_{k=0}^{n}(G_{k} + \delta G_{k}) \lambda^{k}
\end{equation}
and 
\begin{equation}\label{eq:qpolpert}
\det(\lambda I - (Q + \delta Q)) = \sum_{k=0}^{n}(q_{k} + \delta q_{k}) \lambda^{k},
\end{equation}
with $\norm{\delta G_{k}} \lesssim u$ and $\absval{\delta q_{k}} \lesssim u$, $k=0$, \ldots, $n-1$.
\end{lemma}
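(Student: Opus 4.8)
\emph{Proof plan.} The plan is to treat the coefficient matrices $G_{k}$ and the scalar coefficients $q_{k}$ as \emph{polynomial functions of the matrix entries of $Q$}, and then invoke Lipschitz continuity on a ball of bounded radius. For any $M\in\cnxn$ write $\adj(\lambda I - M) = \sum_{k=0}^{n-1} B_{k}(M)\lambda^{k}$ and $\det(\lambda I - M) = \sum_{k=0}^{n} c_{k}(M)\lambda^{k}$, so that in the notation of \eqref{eq:adjpol} we have $G_{k} = B_{k}(Q)$ and $q_{k} = c_{k}(Q)$, while $\adj(\lambda I - (Q+\delta Q))$ and $\det(\lambda I - (Q+\delta Q))$ are obtained by evaluating the same maps at $Q+\delta Q$. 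Since $\norm{\delta Q} \lesssim u < 1$, both $Q$ and $Q + \delta Q$ lie in $\{M : \norm{M}\le 2\}$, and the whole argument reduces to bounding the derivatives of $B_{k}$ and $c_{k}$ on that ball by modest (polynomial-in-$n$) constants.

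To make this quantitative I would use the Faddeev--LeVerrier recursion. Comparing coefficients of $\lambda^{k}$ in the identity $(\lambda I - M)\adj(\lambda I - M) = \det(\lambda I - M)\,I$ gives $B_{n-1}(M) = I$ and
\begin{equation*}
B_{k-1}(M) = M\,B_{k}(M) + c_{k}(M)\,I, \qquad k = n-1,\dotsc,1,
\end{equation*}
which exhibits each $B_{k}(M)$ as a matrix polynomial in $M$; and Newton's identities express each $c_{k}(M)$ as a fixed rational-coefficient polynomial in the traces $\tr(M), \tr(M^{2}), \dotsc, \tr(M^{n-k})$. The bounds then follow step by step. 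Because $Q$ is unitary, $\norm{Q^{j}} = 1$ and $\absval{\tr(Q^{j})} \le n$; because $\norm{Q}\le 2$ and $\norm{Q+\delta Q}\le 2$, a telescoping estimate gives $\norm{(Q+\delta Q)^{j} - Q^{j}} \lesssim j\,\norm{\delta Q} \lesssim u$, hence $\absval{\tr((Q+\delta Q)^{j}) - \tr(Q^{j})} \lesssim u$ for every $j\le n$. Feeding these into Newton's identities yields $\absval{\delta q_{k}} = \absval{c_{k}(Q+\delta Q) - c_{k}(Q)} \lesssim u$ for $k=0,\dotsc,n-1$ (the $c_{k}(Q)$ themselves being the $O(1)$ coefficients of $\lambda^{n}-1$), which is \eqref{eq:qpolpert}; note $q_{n} = c_{n} = 1$ is untouched. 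Then downward induction on $k$ in the recursion gives \eqref{eq:adjupert}: starting from $\delta G_{n-1} = 0$ and using $\norm{G_{k}} = \norm{B_{k}(Q)} \approx 1$ together with $\norm{Q} = 1$,
\begin{equation*}
\delta G_{k-1} \doteq \delta Q\,B_{k}(Q) + Q\,\delta G_{k} + \delta c_{k}\,I,
\end{equation*}
and each term on the right is $\lesssim u$, so $\norm{\delta G_{k}} \lesssim u$ for $k=0,\dotsc,n-1$; since the leading coefficient of the adjugate is $I$ regardless of the matrix, $G_{n} = 0$ and $\delta G_{n} = 0$ as well.

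I expect the only real subtlety to be the claim that the constants hidden in $\lesssim$ stay \emph{modest} — that is, grow polynomially, not exponentially, in $n$. This is precisely where the unitarity of $Q$ is indispensable: each $B_{k}$ is built from products of up to $n-1$ copies of $Q$, so its sensitivity to the perturbation $\delta Q$ is amplified by a factor on the order of $\norm{Q}^{k-1}$, which is harmless only because $\norm{Q} = 1$. (The same perturbation applied directly to the companion matrix $A$, whose norm is $\approx \norm{a}$, would be amplified by roughly $\norm{a}^{\,n-2}$; this is exactly why the analysis is organized around the factorization $A = QR$, with the large quantity $\absval{\alpha} \approx \norm{a}$ entering only linearly through the rank-one part.) A minor bookkeeping point is to confirm that $B_{k}(Q)$ agrees with $G_{k}$ in \eqref{eq:adjpol} with the stated normalization $\norm{G_{k}}\approx 1$, which is immediate from the recursion and the unitarity of $Q$.
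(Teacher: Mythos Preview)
Your proposal is correct and follows essentially the same route as the paper: the Faddeev--LeVerrier recursion $G_{k} = QG_{k+1} + q_{k+1}I$ together with downward induction on $k$ is exactly the paper's argument for bounding $\norm{\delta G_{k}}$. The one difference is in the treatment of $\absval{\delta q_{k}}$: the paper simply invokes Edelman and Murakami's result $\norm{\delta q} \lesssim u\,\norm{q}^{2} = 2u$, whereas you derive the bound directly via Newton's identities and trace perturbations---a more self-contained route that works cleanly here because for this particular $Q$ both the power sums $\tr(Q^{j})$ and the intermediate elementary symmetric functions vanish for $1\le j\le n-1$, so the linearized Newton recursion collapses to a single term.
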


\begin{proof}
For the bounds $\absval{\delta q_{k}} \lesssim u$ we rely on Edelman and Murakami \cite{m460}.
We have $\norm{\delta q} \lesssim u \,\norm{q}^{2} = 2u$.  

The adjugate and the determinant are related by the fundamental equation $B\adj(B) = \det(B) I$.  
Applying this with $B = \lambda I - Q$, we get
\begin{equation*}
(\lambda I - Q)\sum_{k=0}^{n}G_{k}\lambda^{k} = \sum_{k=0}^{n}q_{k}I\lambda^{k}.
\end{equation*}
Expanding the left-hand side and equating like powers of $\lambda$, we obtain the recurrence
\begin{equation}\label{eq:fadlev1}
G_{k} = QG_{k+1} + q_{k+1} I, \quad k = n-1,\,\ldots,\,0.
\end{equation}
This is one half of the 
Faddeev-Leverrier method\index{Faddeev-Leverrier method} \cite[p.~260]{FadFad63}, \cite[p.~87]{Gan59}.
Starting from $G_{n}=0$, and knowing the coefficients $q_{k}$,  we can use (\ref{eq:fadlev1}) to obtain 
all of the coefficients of $\adj(\lambda I - Q)$.  The recurrence holds equally well with $Q$ replaced 
by $Q + \delta Q$.  We have 
\begin{equation*}
G_{k} + \delta G_{k} = (Q + \delta Q)(G_{k+1} + \delta G_{k+1}) + (q_{k+1} + \delta q_{k+1})I,
\end{equation*}  
so
\begin{equation}\label{eq:gkpertind}
\delta G_{k} \doteq \delta Q\,G_{k+1} + Q\,\delta G_{k+1} + \delta q_{k+1}I.
\end{equation}
If $\norm{\delta G_{k+1}} \lesssim u$,  we can 
deduce from (\ref{eq:gkpertind}) that $\norm{\delta G_{k}} \lesssim u$. 
Since we have $\delta G_{n} = 0$ 
to begin with, we get by induction that $\norm{\delta G_{k}} \lesssim u$ 
for all $k$.
\hfill\end{proof}


\begin{lemma}\label{lem:rform2} 
If the companion QR algorithm is applied to the companion matrix $A = QR$, where 
$R = I + (\alpha\, x - e_{n})y^{T}$, then the backward error $\delta R$ satisfies 
\begin{displaymath}
R + \delta R = (I + \delta I) + (\alpha(x + \delta x)-e_{n})(y+\delta y)^{T},
\end{displaymath}
where $\norm{\delta I} \lesssim u$, $\norm{\delta x} \lesssim u$, and $\norm{\delta y} \lesssim u$.
\end{lemma}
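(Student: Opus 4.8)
The plan is to read this lemma as a regrouping of Theorem~\ref{thm:backstabqr1}(d) (equivalently of the exact identity \eqref{eq:bigrpert}) into a form that displays $R + \delta R$ as a copy of $R = I + (\alpha x - e_n)y^T$ with the pieces $I$, $x$, and $y$ perturbed individually. First I would recall the natural splitting of $R$ into its unitary and rank-one parts: compressing $\underline{R}_u = C^*B$ and $\underline{R}_o = \alpha\,\underline{x}\,\underline{y}^T$ by $P$ on the right and $P^T$ on the left, and using $P^T(uv^T)P = (P^Tu)(P^Tv)^T$, gives $R_o = \alpha\,x\,y^T$ with $x = P^T\underline{x}$ and $y = P^T\underline{y}$, whence $R_u = R - R_o = I - e_n y^T$. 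So what has to be shown is that $R_u + \delta R_u$ and $R_o + \delta R_o$ have the forms $(I + \delta I) - e_n(y + \delta y)^T$ and $\alpha(x + \delta x)(y + \delta y)^T$ respectively, with the \emph{same} $\delta y$ and with $\norm{\delta I},\norm{\delta x},\norm{\delta y} \lesssim u$.

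For the rank-one part I would simply compress \eqref{eq:bigrpert}, which already asserts $\underline{R}_o + \underline{\delta R}_o = \alpha(\underline{x} + \underline{\delta x})(\underline{y} + \underline{\delta y})^T$; this yields $R_o + \delta R_o = \alpha(x + \delta x)(y + \delta y)^T$ with $\delta x = P^T\underline{\delta x}$ and $\delta y = P^T\underline{\delta y}$, and with $\norm{\delta x} \lesssim u$, $\norm{\delta y} \lesssim u$ as recorded in the discussion preceding Theorem~\ref{thm:backstabqr1} (the bound on $\underline{\delta x}$ being exactly where the relative-accuracy estimate \eqref{eq:rhobackerr} for the improved turnover is used; the bound on $\underline{\delta y}$ coming from $\underline{\delta y}^T = e_{n+1}^T\underline{\delta R}_u$ together with $\norm{\underline{\delta R}_u} \lesssim u$). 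For the unitary part I would use Theorem~\ref{thm:stableunitary} to get $\norm{\underline{\delta R}_u} \lesssim u$, hence $\norm{\delta R_u} = \norm{P^T\underline{\delta R}_u P} \le \norm{\underline{\delta R}_u} \lesssim u$, and then define $\delta I := \delta R_u + e_n\,\delta y^T$. By construction $R_u + \delta R_u = (I - e_n y^T) + \delta R_u = (I + \delta I) - e_n(y + \delta y)^T$, and $\norm{\delta I} \le \norm{\delta R_u} + \norm{\delta y} \lesssim u$.

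Adding the two halves then closes the argument: $R + \delta R = (R_u + \delta R_u) + (R_o + \delta R_o) = (I + \delta I) - e_n(y + \delta y)^T + \alpha(x + \delta x)(y + \delta y)^T = (I + \delta I) + (\alpha(x + \delta x) - e_n)(y + \delta y)^T$, which is precisely the claimed form, with all three perturbations of norm $\lesssim u$.

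I do not expect any serious obstacle: the real work was done in Theorem~\ref{thm:stableunitary}, Theorem~\ref{thm:backstabqr1}, and in deriving \eqref{eq:bigrpert}. The only subtleties are bookkeeping ones --- checking that the $P^T$-compressed vectors $x$, $y$, $\delta x$, $\delta y$ are the same $n$-dimensional objects that appear in $R = I + (\alpha x - e_n)y^T$, so that the $-e_n$ summand genuinely stays unperturbed and the entire perturbation is carried by $I$, $x$, and $y$; and observing that the correction $e_n\,\delta y^T$ that must be folded into $\delta I$ is harmless only because $\norm{\delta y} \lesssim u$.
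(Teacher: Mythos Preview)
Your proposal is correct and follows essentially the same approach as the paper's proof: both compress \eqref{eq:bigrpert} to $n\times n$, write $R_u = I - e_n y^T$, and define $\delta I = \delta R_u + e_n\,\delta y^T$ so that adding and subtracting $e_n\,\delta y^T$ in $R_u + \delta R_u$ yields the claimed form. The paper's argument is slightly terser, but the logical steps and the definition of $\delta I$ are identical.
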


\begin{proof}
From (\ref{eq:bigrpert}) it follows that
\begin{displaymath}
\underline{R} + \underline{\delta R} = \underline{R}_{u} + \underline{\delta R}_{u} 
+ \alpha\,(\underline{x} + \underline{\delta x})(\underline{y} + \underline{\delta y})^{T}.
\end{displaymath}
Projecting this down to $n \times n$ matrices we get
\begin{equation}\label{eq:rpdr}
R + \delta R = R_{u} + \delta R_{u} 
+ \alpha\,(x + \delta x)(y + \delta y)^{T},
\end{equation}
with $\norm{\delta R_{u}} \lesssim u$, 
$\norm{\delta x} \lesssim u$, and $\norm{\delta y} \lesssim u$.

Recalling the form of $\underline{R}_{u}$, we see that $R_{u} = I - e_{n}y^{T}$, so 
\begin{eqnarray}
R_{u} + \delta R_{u} & =  & I - e_{n}y^{T} + \delta R_{u} \nonumber \\
& = & I - e_{n}(y + \delta y)^{T} + e_{n}\,\delta y^{T} + \delta R_{u}  \nonumber \\
& = & I + \delta I - e_{n}(y + \delta y)^{T}, \label{eq:rupdru}
\end{eqnarray}
where $\delta I = e_{n}\delta y^{T} + \delta R_{u}$, and $\norm{\delta I} \lesssim u$.  Substituting (\ref{eq:rupdru})
into (\ref{eq:rpdr}), we get  
\begin{equation*}
R + \delta R = I + \delta I + (\alpha(x + \delta x) - e_{n})(y + \delta y)^{T},
\end{equation*}
completing the proof. \hfill\end{proof}

\begin{theorem}\label{thm:backerrmonic}
Suppose we apply the companion QR algorithm to the monic polynomial $p$ with coefficient vector $a$.  
Let $\tilde{p}$, with coefficient vector $\tilde{a} = a + \delta a$, denote the monic polynomial that has the computed 
roots as its exact zeros.  Then 
\begin{displaymath}
\norm{\delta a} \lesssim u\,\norm{a}.
\end{displaymath}
\end{theorem}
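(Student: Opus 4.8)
The plan is to combine the backward-error description of $R$ from Lemma~\ref{lem:rform2} with the characteristic-polynomial formula \eqref{eq:charpolform}, tracking how each first-order perturbation propagates to the coefficient vector. By Theorem~\ref{thm:backstabqr1} the computed roots are the eigenvalues of $\hat A = U^{*}(Q+\delta Q)(R+\delta R)U$, so $\tilde p$ is the characteristic polynomial of $(Q+\delta Q)(R+\delta R)$. Using Lemma~\ref{lem:rform2} I would write $R+\delta R = (I+\delta I) + (\alpha(x+\delta x)-e_{n})(y+\delta y)^{T}$, whence
\[
(Q+\delta Q)(R+\delta R) = \check Q + \tilde Q(\alpha\tilde x - e_{n})\tilde y^{T},
\]
with $\tilde Q = Q+\delta Q$, $\tilde x = x+\delta x$, $\tilde y = y+\delta y$, and $\check Q = (Q+\delta Q)(I+\delta I) \doteq Q + \delta Q + Q\delta I$ a (no longer unitary) perturbation of $Q$ of norm $\lesssim u$.

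First I would apply Lemma~\ref{lem:detadj} with $K = \lambda I - \check Q$ to get $\tilde p(\lambda) = \det(\lambda I - \check Q) - \tilde y^{T}\adj(\lambda I-\check Q)\,\tilde Q(\alpha\tilde x - e_{n})$. The proof of Lemma~\ref{lem:adjupert} uses only the Faddeev--Leverrier recurrence and the Edelman--Murakami coefficient bound, neither of which needs the perturbed matrix to be unitary, so it applies verbatim to $\check Q = Q + O(u)$: the coefficients of $\det(\lambda I - \check Q)$ are $q_{k}+\delta\hat q_{k}$ with $\absval{\delta\hat q_{k}} \lesssim u$, and $\adj(\lambda I - \check Q) = \sum_{k}(G_{k}+\delta\hat G_{k})\lambda^{k}$ with $\norm{\delta\hat G_{k}} \lesssim u$. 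Next I would expand $\tilde Q(\alpha\tilde x - e_{n}) = Q(\alpha x - e_{n}) + \xi$, where $\xi \doteq \delta Q(\alpha x - e_{n}) + \alpha Q\,\delta x$ satisfies $\norm{\xi} \lesssim u(\absval\alpha + 1) \approx u\norm{a}$, since $\norm{\delta Q} \lesssim u$, $\norm{\delta x} \lesssim u$, and $\absval\alpha = \norm{a}$.

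Substituting these expansions and subtracting \eqref{eq:charpolform} (and using $\norm{G_{k}} \approx 1$, $\norm{y}=1$, $\norm{Q}=1$, $\norm{\alpha x - e_{n}} \le \absval\alpha + 1$), the $k$-th coefficient perturbation is, to first order,
\[
\delta a_{k} \doteq \delta\hat q_{k} - \delta y^{T}G_{k}Q(\alpha x - e_{n}) - y^{T}\delta\hat G_{k}Q(\alpha x - e_{n}) - y^{T}G_{k}\xi,
\]
and each of the four terms is $\lesssim u\norm{a}$ (for the first use $\norm{a} \ge 1$). Since $\delta a_{n} = 0$, summing the $n$ estimates gives $\norm{\delta a} \lesssim u\norm{a}$, as claimed. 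The key structural point — precisely why we beat the Edelman--Murakami bound $u\norm{a}^{2}$ — is that $\alpha$ enters \eqref{eq:charpolform} only through the single rank-one factor $\alpha x - e_{n}$, always paired against quantities of norm $\approx 1$, so every perturbation term carries at most one factor of $\alpha$, never $\absval\alpha^{2}$. The main obstacle I anticipate is the bookkeeping forced by the non-identity factor $I+\delta I$ in $R+\delta R$: it must be absorbed into $Q$ (turning $Q$ into the non-unitary $\check Q$) before Lemma~\ref{lem:detadj} and the Faddeev--Leverrier argument can be invoked, and one has to confirm that those tools survive the loss of unitarity — which they do, being purely determinantal identities with norm bounds governed only by $\norm{\check Q} \approx 1$.
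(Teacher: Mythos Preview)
Your proposal is correct and follows essentially the same route as the paper: absorb the $I+\delta I$ factor into $Q$ to form a perturbed (non-unitary) matrix $\check Q = Q + \widetilde{\delta Q}$ with $\norm{\widetilde{\delta Q}}\lesssim u$, apply Lemma~\ref{lem:detadj} and Lemma~\ref{lem:adjupert} to $\lambda I-\check Q$, and expand to first order. Your observation that Lemma~\ref{lem:adjupert} does not require unitarity is exactly the point the paper uses implicitly when it invokes that lemma for $Q+\widetilde{\delta Q}$; the only cosmetic difference is that you bundle the last two terms of the paper's expansion of $\delta a_k$ into a single $y^{T}G_{k}\xi$.
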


\begin{proof}
From Theorem~\ref{thm:backstabqr1} we know that $\tilde{p}$ is the characteristic polynomial  
of a matrix $A + \delta A = (Q + \delta Q)(R + \delta R)$, with $\norm{\delta{Q}} \lesssim u$.  
The form of $R + \delta{R}$ is given by Lemma~\ref{lem:rform2}.  
\begin{eqnarray*}
A + \delta A & = & (Q + \delta Q)(R + \delta R) \\
& = & (Q + \delta Q)[(I + \delta I) + (\alpha(x + \delta x)-e_{n})(y+\delta y)^{T}] \\
& = & (Q + \widetilde{\delta Q}) + (Q + \delta Q)(\alpha(x + \delta x) - e_{n})(y + \delta y)^{T},
\end{eqnarray*}
where $\widetilde{\delta Q} \doteq Q\,\delta I + \delta Q$ and $\norm{\widetilde{\delta Q}} \lesssim u$.
Now, using (\ref{eq:charpolform}) with $A$ replaced by $A+\delta A$, we get 
\begin{eqnarray}
\tilde{p}(\lambda)   & =  & \det(\lambda I - (A + \delta A)) \label{eq:pertpol}, \\
& = &  \sum_{k=0}^{n}\left[ (q_{k} + \delta q_{k}) + (y+\delta y)^{T}(G_{k} 
+ \delta G_{k})(Q + \delta Q)(\alpha\,(x + \delta x) - e_{n})\right]\lambda^{k}. \nonumber
\end{eqnarray}
where $\norm{\delta G_{k}} \lesssim u$ and $\absval{\delta q_{k}} \lesssim u$ by Lemma~\ref{lem:adjupert}. 
Expanding (\ref{eq:pertpol}) and ignoring higher order terms, we obtain 
\begin{eqnarray*}
\delta a_{k} & \doteq  & 
\delta q_{k} + \delta y^{T}\,G_{k}\,Q(\alpha\,x - e_{n}) 
+ y^{T}\delta G_{k}Q(\alpha\,x - e_{n}) \\
& & +\ y^{T}G_{k}\,\delta Q(\alpha\,x 
- e_{n}) + y^{T}G_{k}Q(\alpha\,\delta x)
\end{eqnarray*}
for $k=0$, \ldots, $n-1$.   Each term on the right-hand side has one factor that is $\lesssim u$.  All terms 
except the first contain 
exactly one factor $\alpha$ and other factors that are $\approx 1$.  Thus  
$\absval{\delta a_{k}} \lesssim u\,\absval{\alpha} = u\,\norm{a}$,  and therefore $\norm{\delta a} \lesssim u\, \norm{a}$.
\hfill\end{proof}

Figure~\ref{fig:backerrmonic} verifies that the backward error grows linearly 
in $\norm{a}$.  If we compare this with Figures~\ref{fig:1} and \ref{fig:2}, we see that the companion QR algorithm (with
no balancing step) has a significantly smaller backward error than 
the unstructured QR algorithm, with or without balancing.

\begin{figure}
  \centering
  \begin{tikzpicture}%
  \begin{loglogaxis}[%
    axis background/.style={fill=white},%
    clip mode=individual,
    name=plot55a,%
    anchor = west,%
    scale only axis,%
    xmin=1e0, xmax=1e12,%
    ymin=1e-18, ymax=1e+8,%
    yticklabel pos=left,
    ylabel={$\norm{a-\tilde{a}}$},%
    every axis y label/.style= {at={(-0.25,0.5)},rotate=90},
    width=0.36\textwidth,%
    height=0.18\textheight,%
    axis on top,%
    ]
    
    
    \addplot[SPECblue, semithick,only marks, mark=square]%
    table [x=normamon,y=QR||a-atilde||]{dat/be_0.dat};  
  
    \addplot[black, semithick] coordinates{
      (1e00,1e-14)
      (1e04,1e-10)
      (1e08,1e-06)
      (1e12,1e-02)
    };
    \node[coordinate,pin={[pin distance=.50cm]below:{$\norm{a}$}}]
    at (axis cs:  5e+8 , 5e-6 ){};
    \addplot[black, semithick] coordinates{
      (1e00,1e-14)
      (1e04,1e-06)
      (1e08,1e+02)
      (1e12,1e+10)
    };
    \node[coordinate,pin={[pin distance=.50cm]above:{$\norm{a}^{2}$}}]
    at (axis cs:  1e+7 , 1e-0 ){};


  \end{loglogaxis}
  \end{tikzpicture}
  \caption{Backward error of companion QR on the polynomial coefficients plotted against $\norm{a}$}\label{fig:backerrmonic}
\end{figure}
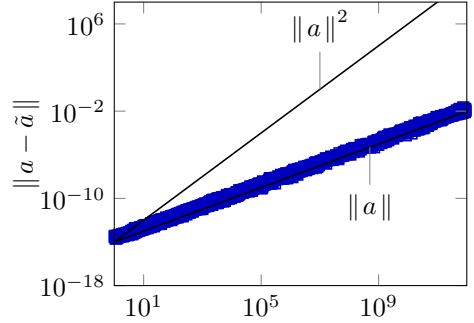

So far throughout this section 
we have assumed for convenience that we are dealing with a monic polynomial.  In practice we will 
often have a non-monic $p$, which we make monic by rescaling it.  The following theorem covers this case.

\begin{theorem}\label{thm:backerrnonmonic2}
Suppose we compute the roots of a non-monic polynomial $p$ with coefficient vector $a$ by applying
the companion QR algorithm to the monic polynomial $p/a_{n}$ with coefficient vector $a/a_{n}$.  
Let $\tilde{p}$ denote the monic polynomial that has the computed 
roots as its exact zeros, let $\hat{p} = a_{n}\tilde{p}$, and let $a + \delta a$ denote the coefficient
vector of $\hat{p}$.  Then 
\begin{displaymath}
\norm{\delta a} \lesssim u\,\norm{a}.
\end{displaymath}
\end{theorem}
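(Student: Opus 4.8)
The plan is to reduce immediately to the monic case already settled in Theorem~\ref{thm:backerrmonic}. Write $b = a/a_{n}$ for the coefficient vector of the monic polynomial $p/a_{n}$, so that $a = a_{n}b$ and $\norm{b} = \norm{a}/\absval{a_{n}}$. By hypothesis the companion QR algorithm is applied to $p/a_{n}$, which is exactly the situation covered by Theorem~\ref{thm:backerrmonic}. Hence, writing $\tilde{b} = b + \delta b$ for the coefficient vector of the monic polynomial $\tilde{p}$ whose exact zeros are the computed roots, that theorem yields $\norm{\delta b} \lesssim u\,\norm{b}$.

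The second step is to undo the scaling. The polynomial $\hat{p} = a_{n}\tilde{p}$ has coefficient vector $a_{n}\tilde{b} = a_{n}b + a_{n}\delta b = a + a_{n}\,\delta b$, and comparing with the definition $a + \delta a$ of the coefficient vector of $\hat{p}$ we read off $\delta a = a_{n}\,\delta b$. Taking norms,
\begin{displaymath}
\norm{\delta a} = \absval{a_{n}}\,\norm{\delta b} \lesssim \absval{a_{n}}\,u\,\norm{b}
= \absval{a_{n}}\,u\,\frac{\norm{a}}{\absval{a_{n}}} = u\,\norm{a},
\end{displaymath}
which is the claimed bound.

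There is no real obstacle here: all of the substantive work is hidden in Theorem~\ref{thm:backerrmonic}, and the non-monic statement follows precisely because that theorem delivers a bound proportional to the norm of the (monic) coefficient vector. The one subtlety worth flagging is that $\delta a$ is measured relative to the original $a$, not relative to $a/a_{n}$; this is exactly what makes the factor $\absval{a_{n}}$ picked up when rescaling back to $\hat{p}$ cancel the $1/\absval{a_{n}}$ concealed in $\norm{b}$, leaving the clean estimate $\norm{\delta a} \lesssim u\,\norm{a}$ with no residual dependence on $\absval{a_{n}}$.
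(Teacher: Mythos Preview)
Your proof is correct and follows exactly the same approach as the paper: apply Theorem~\ref{thm:backerrmonic} to the monic polynomial $p/a_{n}$ and then rescale by $a_{n}$. You have simply written out the details of the rescaling that the paper's one-line proof leaves implicit.
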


\begin{proof}
Apply Theorem~\ref{thm:backerrmonic} to $p/a_{n}$, then rescale by multiplying by $a_{n}$.
\hfill\end{proof}

\subsection{Backward error of the companion QZ algorithm}

We now consider the backward error of the companion QZ algorithm, 
which finds the zeros of a non-monic polynomial
\begin{displaymath}
p(z) = a_{0} + a_{1}z + \cdots  + a_{n-1}z^{n-1} + a_{n}z^{n}
\end{displaymath}
by computing the eigenvalues of a pencil $V - \lambda W$ of the form
(\ref{eq:cpp}) with vectors $v$ and $w$ satisfying (\ref{eq:def:vw}).
We will assume a reasonable choice of $v$ and $w$ so that $\max\{\norm{v},\norm{w}\} 
\approx \norm{a}$, where $a$ is the coefficient vector of $p$ as before. 
(In fact, in all of our numerical experiments we have made the 
simplest choice, namely the one given by (\ref{eq:clp:top}).)  

Notice that in this setting we have the freedom to rescale the coefficients 
of the polynomial by an arbitrary factor.  Thus we can always arrange to
have $\norm{a} \approx 1$, for example.  This is the advantage of this approach,
and this is what allows us to get an optimal backward error bound in this case.

When we run the companion QZ algorithm on $(V,W)$, we obtain 
\begin{displaymath}
\hat{V} = U^{*}(V + \delta V)Z, \quad \hat{W} = U^{*}(W + \delta W)Z,
\end{displaymath}
where $\delta V$ and $\delta W$ are the backward errors.  We begin with 
an analogue of Theorem~\ref{thm:backstabqr1}.  

\begin{theorem}\label{thm:backstabqz1}
If the companion pencil eigenvalue problem
is solved by the companion QZ algorithm, then 
\begin{enumerate}
\item[(a)] $\hat{V} = U^{*}(V + \delta V)Z$, where $\norm{\delta V} \lesssim u \norm{a}$, 
\item[(b)] $\hat{W} = U^{*}(W + \delta W)Z$, where $\norm{\delta W} \lesssim u \norm{a}$. 
\end{enumerate}
\end{theorem}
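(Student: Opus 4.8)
The plan is to run the Theorem~\ref{thm:backstabqr1} argument twice in parallel, once for $V=QR$ and once for $W$, exploiting the fact that $R$ and $W$ are stored in the \emph{same} data-sparse format. Recall from Section~\ref{subsec:factor} that the companion QZ representation consists of five descending sequences of core transformations: the unitary Hessenberg factor $Q=Q_{1}\cdots Q_{n-1}$ of $V$, the pair $(C,B)$ encoding $R$ via \eqref{eq:factorize:upper:triangular}, and a pair $(C_{W},B_{W})$ encoding $W$ in exactly the same way, with $C_{W}[w_{1},\dots,w_{n},1]^{T}=\alpha_{W}e_{1}$. During one QZ step each of these sequences is altered by a succession of turnovers; only $Q$ additionally sees the two fusions at the start and end of the step. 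The core transformations applied on the left of the pencil multiply out to a unitary $U$ and those applied on the right to a unitary $Z$, and by the structure of the Moler--Stewart iteration the updates of the five factors compose so that $\hat{V}=U^{*}(V+\delta V)Z$ and $\hat{W}=U^{*}(W+\delta W)Z$ with this common pair $(U,Z)$; the task is to bound $\delta V$ and $\delta W$.

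First I would invoke Theorem~\ref{thm:stableunitary} on each of the five sequences, obtaining norm-$\lesssim u$ backward errors in $Q$, $C$, $B$, $C_{W}$ and $B_{W}$. For $Q$ this already gives $\hat{Q}=U^{*}(Q+\delta Q)X$ with $\norm{\delta Q}\lesssim u$, where $X$ collects the core transformations exchanged between $Q$ and the triangular parts. For $R$ I would transcribe the derivation of \eqref{eq:bigrpert} and of parts~(c)--(d) of Theorem~\ref{thm:backstabqr1}: the unitary part of $R$ contributes a term with $\norm{\delta R_{u}}\lesssim u$, while the rank-one part is recovered from \eqref{eq:yrecover} through the scalar $\rho=e_{n+1}^{T}C^{*}e_{1}=(-1)^{n}s_{1}\cdots s_{n}$, the signed product of the sines of $C$. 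Because $C$ is touched by turnovers only, Theorem~\ref{thm:sprodfloat} applies and yields \eqref{eq:rhobackerr}, i.e.\ $\hat{\rho}=\rho(1+\delta\rho_{r})$ with $\absval{\delta\rho_{r}}\lesssim u$; hence $\delta R\doteq\delta R_{u}+\alpha(\delta x\,y^{T}+x\,\delta y^{T})$ with $\norm{\delta x},\norm{\delta y}\lesssim u$, so $\norm{\delta R}\lesssim u\,\absval{\alpha}$. Since $\absval{\alpha}=\norm{[v_{1},\dots,v_{n},1]}=\sqrt{\norm{v}^{2}+1}$ and, after the customary rescaling making $\norm{a}\gtrsim 1$ (the freedom afforded by the pencil formulation), $\sqrt{\norm{v}^{2}+1}\lesssim\norm{v}+1\lesssim\norm{a}$, this gives $\norm{\delta R}\lesssim u\,\norm{a}$. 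Running the identical argument on $W$ --- now with $\rho_{W}=e_{n+1}^{T}C_{W}^{*}e_{1}=\pm\,(\text{product of the sines of }C_{W})$, again controlled by Theorem~\ref{thm:sprodfloat} --- produces $\norm{\delta W}\lesssim u\,\absval{\alpha_{W}}=u\sqrt{\norm{w}^{2}+1}\lesssim u\,\norm{a}$, which is part~(b).

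Part~(a) then follows by combining the pieces through $V=QR$: $Q$ is unitary so $\norm{R}=\norm{V}\approx\max\{1,\norm{v}\}\lesssim\norm{a}$, and a first-order expansion gives $\delta V\doteq\delta Q\,R+Q\,\delta R$, whence $\norm{\delta V}\lesssim\norm{\delta Q}\,\norm{R}+\norm{\delta R}\lesssim u\,\norm{a}$. I expect the one genuinely delicate point --- the reason the bound is $u\,\norm{a}$ rather than $u\,\norm{a}^{2}$ --- to be the treatment of the rank-one parts of $R$ and $W$: one must check that the scalars $\rho$ and $\rho_{W}$ pinning those parts down, which are exact invariants of the iteration, suffer only a \emph{relative} perturbation of size $\lesssim u$ in floating-point arithmetic. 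That is precisely what the improved turnover of Section~\ref{subsec:newturn} buys, via Theorem~\ref{thm:sprodfloat} together with the identifications $\rho=(-1)^{n}s_{1}\cdots s_{n}$ and $\rho_{W}=\pm\prod(\text{sines of }C_{W})$, and it hinges on the fact that the QZ algorithm never applies a fusion to the $C$ or $C_{W}$ sequence. The remainder is a routine transcription of the companion QR analysis to the pencil setting.
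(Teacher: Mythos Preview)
Your proposal is correct and follows essentially the same approach as the paper: the paper's own proof simply says that the argument for $V=QR$ is identical to that of Theorem~\ref{thm:backstabqr1}, and that the argument for $W$ is even simpler since there is no unitary $Q$ factor. Your write-up spells out these steps in more detail (invoking Theorem~\ref{thm:stableunitary} on each of the five sequences, using Theorem~\ref{thm:sprodfloat} to control $\rho$ and $\rho_{W}$, and combining via $\delta V\doteq\delta Q\,R+Q\,\delta R$), which is exactly what the paper intends; the one additional remark you make about needing $\norm{a}\gtrsim 1$ to absorb the ``$+1$'' in $\absval{\alpha}=\sqrt{\norm{v}^{2}+1}$ is a fair observation that the paper glosses over under its standing assumption $\max\{\norm{v},\norm{w}\}\approx\norm{a}$.
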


\begin{proof}
The proof for $V=QR$ is identical to the proof of Theorem~\ref{thm:backstabqr1}.  The proof
for $W$ is even simpler because $W$ is already upper triangular;  there is no unitary $Q$ factor
to take into account.  
\end{proof}

The left panel of Figure~\ref{fig:backstabqz} gives numerical confirmation of
Theorem~\ref{thm:backstabqz1}.  We see that the growth is linear in $\norm{a}$
as claimed.

\begin{figure}
  \centering
  \begin{tikzpicture}%
    \begin{loglogaxis}[%
      name=numplot2,%
      clip mode=individual,
      scale only axis,%
      xmin=1e0, xmax=1e12,%
      ymin=1e-18, ymax=1e+8,%
      xticklabel pos=left,
      ylabel={Backward Error},%
      every axis y label/.style= {at={(-0.25,0.5)},rotate=90},%
      width=0.36\textwidth,%
      height=0.18\textheight,%
      axis on top,%
      legend columns=1,%
      legend style = {%
        fill opacity = 0.5,%
        draw opacity = 1,%
        text opacity = 1,%
        cells = {anchor = west},%
        legend pos = north west,%
        at = {(0.025,0.975)},%
      },%
      ]



      \addplot[SPECgreen, semithick,only marks, mark=pentagon]%
      table [x=norma,y=QZ||A-Ahat||]{dat/be_0.dat};  

      \addplot[SPECorange, semithick,only marks, mark=triangle]%
      table [x=norma,y=QZ||B-Bhat||]{dat/be_0.dat};

      \addplot[black, semithick] coordinates{
        (1e00,1e-14)
        (1e04,1e-10)
        (1e08,1e-06)
        (1e12,1e-02)
      };
      \node[coordinate,pin={[pin distance=.50cm]above:{$\norm{a}$}}]
      at (axis cs:  5e+9 , 5e-5 ){};
      \addplot[black, semithick] coordinates{
        (1e00,1e-14)
        (1e04,1e-06)
        (1e08,1e+02)
        (1e12,1e+10)
      };
      \node[coordinate,pin={[pin distance=.50cm]above:{$\norm{a}^{2}$}}]
      at (axis cs:  1e+7 , 1e-0 ){};

      \legend{$\norm{\delta V}_{F}$, $\norm{\delta W}_{F}$}
    \end{loglogaxis}

  \begin{loglogaxis}[%
    axis background/.style={fill=white},%
    clip mode=individual,
    name=plot510b,%
    at = {($(numplot2.east)+(+3pt,0)$)},
    anchor = west,%
    scale only axis,%
    xmin=1e0, xmax=1e12,%
    ymin=1e-18, ymax=1e+8,%
    yticklabel pos=right,
    ylabel={$\norm{a-\tilde{a}}$},
    every axis y label/.style= {at={(1.25,0.5)},rotate=-90},
    width=0.36\textwidth,%
    height=0.18\textheight,%
    axis on top,%
    ]
    
    
    \addplot[SPECblue, semithick,only marks, mark=square]%
    table [x=norma,y=QZ||a-ahat||]{dat/be_0.dat};  
    
    \addplot[black, semithick] coordinates{
      (1e00,1e-14)
      (1e04,1e-10)
      (1e08,1e-06)
      (1e12,1e-02)
    };
    \node[coordinate,pin={[pin distance=.50cm]below:{$\norm{a}$}}]
    at (axis cs:  5e+8 , 5e-6 ){};
    \addplot[black, semithick] coordinates{
      (1e00,1e-14)
      (1e04,1e-06)
      (1e08,1e+02)
      (1e12,1e+10)
    };
    \node[coordinate,pin={[pin distance=.50cm]above:{$\norm{a}^{2}$}}]
    at (axis cs:  1e+7 , 1e-0 ){};


  \end{loglogaxis}

  \end{tikzpicture}
  \caption{Backward errors $\norm{\delta V}_{F}$ and $\norm{\delta W}_{F}$ (both
    left) and $\norm{a -\hat{a}}$ (right) of companion QZ plotted against
    $\norm{a}$}
  \label{fig:backstabqz}
\end{figure}
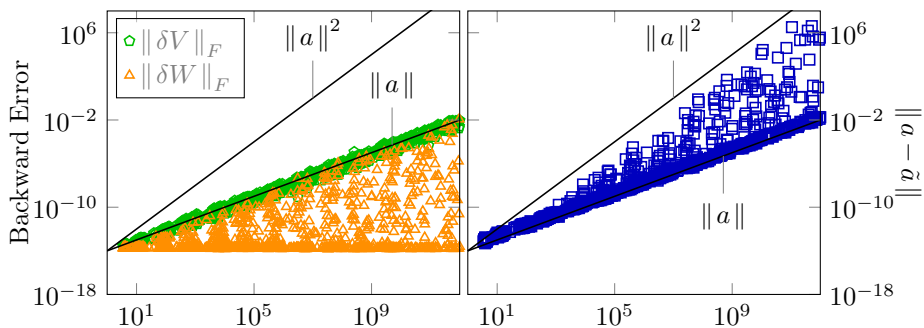

Now let us consider the backward error of the companion $QZ$ algorithm on the
polynomial coefficient vector $a$.  We will compute an optimally scaled backward
error as follows.  Given the computed roots, we build the monic polynomial
$\tilde{p}$ (with coefficient vector $\tilde{a}$) that has these as its exact
roots.  We then let $\hat{p} = \gamma \tilde{p}$ (with coefficient vector
$\hat{a}$), where $\gamma$ is chosen so that $\norm{a - \gamma\,\tilde{a}}$ is
minimized.  We hope to get a backward error $\norm{a - \hat{a}}$ that is linear
in $\norm{a}$, but the right panel of Figure~\ref{fig:backstabqz} shows what we actually get.  The
backward error seems to grow quadratically in $\norm{a}$, which is a
disappointment.

Combining Theorem~\ref{thm:backstabqz1} with the analysis of 
Edelman and Murakami \cite{m460}, we get the following
result.
\begin{theorem}\label{thm:backstabqz3}
  The backward error of the companion QZ algorithm on the polynomial coefficient
  vector satisfies
$$\norm{a - \hat{a}} \lesssim u\, \norm{a}^{2}.$$
\end{theorem}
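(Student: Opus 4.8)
The plan is to obtain this bound as a corollary of Theorem~\ref{thm:backstabqz1} together with a multilinear expansion of the characteristic polynomial of the pencil, which is exactly the Edelman--Murakami mechanism adapted to the pencil setting. First I would record that $p(\lambda)$ coincides, up to the fixed scalar $(-1)^{n}$, with $\det(V - \lambda W)$ for every pencil of the form \eqref{eq:cpp}--\eqref{eq:def:vw}, and that by Theorem~\ref{thm:backstabqz1} the computed roots are the exact eigenvalues of the perturbed pencil $(V + \delta V,\,W + \delta W)$ with $\norm{\delta V} \lesssim u\,\norm{a}$ and $\norm{\delta W} \lesssim u\,\norm{a}$. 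Writing $\tilde{p}$ for the monic polynomial with the computed roots, we then have $(-1)^{n}\det\bigl((V+\delta V) - \lambda(W+\delta W)\bigr) = \gamma'\,\tilde{p}(\lambda)$ with $\gamma' = (-1)^{n}\det(W+\delta W)$; let $\hat{a}'$ denote the coefficient vector of this polynomial. Since the scalar $\gamma$ in the statement is chosen to minimize $\norm{a - \gamma\tilde{a}}$, we have $\norm{a - \hat{a}} \le \norm{a - \hat{a}'}$, so it suffices to prove $\norm{a - \hat{a}'} \lesssim u\,\norm{a}^{2}$. (As in the footnote of Section~\ref{sec:core}, I assume eigenvalues at infinity have been deflated, so the perturbed pencil is regular of degree $n$ and $\gamma' = a_{n} + O(u\norm{a})$ is meaningful.)

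Next I would run the Edelman--Murakami argument on $\hat{a}' - a$. Expand by multilinearity in the columns: $\det(V - \lambda W) = \sum_{S\subseteq\{1,\dots,n\}} (-\lambda)^{|S|}\det(M_{S})$, where $M_{S}$ is the $n\times n$ matrix whose $j$-th column is the $j$-th column of $W$ if $j\in S$ and of $V$ otherwise, so that the coefficient of $\lambda^{k}$ in $p$ is (up to sign) $\sum_{|S|=k}\det(M_{S})$. The decisive structural fact from \eqref{eq:cpp} is that columns $1,\dots,n-1$ of both $V$ and $W$ are standard basis vectors; hence every $M_{S}$ has exactly one column of norm $\approx\norm{a}$ (its last column, taken from $v$ or from $w$) and $n-1$ columns of unit norm, and moreover only polynomially many of the $2^{n}$ sets $S$ give a nonzero $\det(M_{S})$. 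Passing to the perturbed pencil and expanding to first order, $\delta\bigl(\det M_{S}\bigr) \doteq \sum_{j}\det\bigl(M_{S}^{(j)}\bigr)$, where $M_{S}^{(j)}$ is $M_{S}$ with its $j$-th column replaced by the corresponding column of $\delta V$ or $\delta W$. By Hadamard's inequality each such term is bounded by the product of its column norms: the replaced column contributes a factor $\lesssim u\,\norm{a}$, and the remaining $n-1$ columns contribute at most one further factor $\approx\norm{a}$ (and none at all when the replaced column was the last one), so $|\delta(\det M_{S})| \lesssim u\,\norm{a}^{2}$. Summing over the polynomially many nonvanishing $S$ yields $|\hat{a}'_{k} - a_{k}| \lesssim u\,\norm{a}^{2}$ for every $k$, hence $\norm{a - \hat{a}'} \lesssim u\,\norm{a}^{2}$, and the theorem follows.

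The estimates themselves are routine; the only delicate points are bookkeeping ones. One is the passage from the ``natural'' normalization $\gamma' = (-1)^{n}\det(W+\delta W)$ to the optimally scaled backward error of the statement — handled simply by minimality of $\gamma$, but only after one has ensured that the leading coefficient $\det(W+\delta W)$ is not swamped by roundoff, i.e.\ that infinite eigenvalues have been deflated. The other is the combinatorial claim that only $\mathrm{poly}(n)$ of the determinants $\det M_{S}$ survive and that $u\,\norm{a}^{2}$ is the correct per-term order (the extra power of $\norm{a}$ coming precisely from perturbing a unit column while the large $v$- or $w$-column is still present); this is the content of the Edelman--Murakami analysis \cite{m460}, which we invoke rather than reprove. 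I expect this bookkeeping, rather than any genuine difficulty, to be the main obstacle — which is exactly why the result is phrased as a consequence of Theorem~\ref{thm:backstabqz1}: the substantive work, the $u\,\norm{a}$ bound on the pencil perturbation, has already been done there.
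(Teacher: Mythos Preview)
Your proposal is correct and follows exactly the route the paper takes: the paper's entire argument for this theorem is the one sentence ``Combining Theorem~\ref{thm:backstabqz1} with the analysis of Edelman and Murakami \cite{m460}, we get the following result,'' and you have simply unpacked what that combination amounts to (pencil backward error $\lesssim u\,\norm{a}$ from Theorem~\ref{thm:backstabqz1}, then the multilinear/column-wise expansion of $\det(V-\lambda W)$ yielding the extra factor $\norm{a}$). Your handling of the normalization via the optimal $\gamma$ is a reasonable way to make the bookkeeping precise.
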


So far it looks like companion QR is (surprisingly) more accurate than companion QZ, but we have not 
yet taken into account the freedom to rescale that we have in the companion QZ case.  

\begin{theorem}\label{thm:qzscale}
Suppose we compute the zeros of the polynomial $p$ with coefficient vector $a$ by applying the 
companion QZ algorithm to polynomial $p/\norm{a}$ with coefficient vector $a/\norm{a}$.  
Then the backward error satisfies 
$$\norm{a - \hat{a}} \lesssim u\, \norm{a}.$$
\end{theorem}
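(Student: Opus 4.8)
The plan is to reduce directly to Theorem~\ref{thm:backstabqz3} by a homogeneity argument, in the same spirit as the reduction of Theorem~\ref{thm:backerrnonmonic2} to Theorem~\ref{thm:backerrmonic}. Write $b = a/\norm{a}$, so that $\norm{b} = 1$; this $b$ is the coefficient vector of the polynomial $p/\norm{a}$ that we actually feed to the algorithm. The first observation is that scaling a polynomial by a nonzero constant does not change its roots --- it only rescales the entries of $W$, hence of the whole pencil, while preserving its eigenvalues --- so the computed roots $\lambda_{1},\dots,\lambda_{n}$, and therefore the monic polynomial $\tilde{p}$ with $\tilde{p}(\lambda)=(\lambda-\lambda_{1})\cdots(\lambda-\lambda_{n})$ and its coefficient vector $\tilde{a}$, are the same object whether we regard them as produced from $p$ or from $p/\norm{a}$.

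Next I would track the optimal scaling factor through the rescaling. For the scaled problem one forms $\hat{p}_{b}=\gamma_{b}\tilde{p}$, where $\gamma_{b}$ minimizes $\norm{b-\gamma_{b}\tilde{a}}$; for the original problem $\hat{p}=\gamma\tilde{p}$, where $\gamma$ minimizes $\norm{a-\gamma\tilde{a}}$. Since $\norm{a-\gamma\tilde{a}}=\norm{a}\,\norm{b-\norm{a}^{-1}\gamma\,\tilde{a}}$, the two least-squares problems have minimizers related by $\gamma_{b}=\norm{a}^{-1}\gamma$, so the corresponding coefficient vectors satisfy $\hat{a}=\norm{a}\,\hat{b}$, where $\hat{b}=\gamma_{b}\tilde{a}$ is the coefficient vector of $\hat{p}_{b}$.

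Finally I apply Theorem~\ref{thm:backstabqz3} to $p/\norm{a}$. Its coefficient vector $b$ has norm $1$, and (as the surrounding discussion notes) the running assumption $\max\{\norm{v},\norm{w}\}\approx\norm{b}$ is preserved under the rescaling, so the chain of results Theorems~\ref{thm:backstabqz1}--\ref{thm:backstabqz3} applies verbatim and yields $\norm{b-\hat{b}}\lesssim u\,\norm{b}^{2}=u$. Multiplying by $\norm{a}$ and using $\hat{a}=\norm{a}\,\hat{b}$ from the previous step gives $\norm{a-\hat{a}}=\norm{a}\,\norm{b-\hat{b}}\lesssim u\,\norm{a}$, which is the assertion.

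The only content beyond Theorem~\ref{thm:backstabqz3} is the bookkeeping in the middle paragraph about how the optimal scaling factor transforms under a global rescaling of the coefficient vector --- this is the step I would write most carefully, since it is exactly what lets the $\norm{a}^{2}$ of Theorem~\ref{thm:backstabqz3} collapse to $\norm{a}$ once $\norm{b}\approx 1$ has been arranged. Everything else is routine: the invariance of the roots under scaling of $p$ is immediate, and the backward-error bounds are quoted directly. I do not expect any genuine obstacle here; the theorem is essentially a corollary, and the proof in the paper could plausibly be a single sentence along the lines of ``Apply Theorem~\ref{thm:backstabqz3} to $p/\norm{a}$ and rescale by $\norm{a}$.''
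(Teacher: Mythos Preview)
Your proposal is correct and follows exactly the paper's approach: apply Theorem~\ref{thm:backstabqz3} to $b=a/\norm{a}$ and rescale by $\norm{a}$. Your middle paragraph on how the optimal scaling factor transforms is more careful than the paper, which simply asserts $a-\hat{a}=\norm{a}(b-\hat{b})$ without justification; otherwise the arguments are identical, and your closing guess about the paper's proof is spot on.
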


\begin{proof}
By Theorem~\ref{thm:backstabqz3} the backward error on $b=a/\|a\|$ satisfies 
$\norm{b - \hat{b}} \lesssim u\,\norm{b}^{2} = u$.  Therefore the backward error on $a$, which is 
$a - \hat{a} = \norm{a}(b - \hat{b})$ satisfies $\norm{a - \hat{a}} = \norm{a}\,\norm{b-\hat{b}}\lesssim u\,\norm{a}$.
\hfill\end{proof}

In the interest of full disclosure we must point out that this argument applies equally well to any stable method for computing
the eigenvalues of the pencil.   If we use, for example, the unstructured QZ algorithm on the rescaled 
polynomial $p/\norm{a}$, we will get the same result.  
Figure~\ref{fig:qzscale}  provides numerical confirmation of Theorem~\ref{thm:qzscale}.  In the left panel we
have the backward error for companion QZ, and in the right panel we have the backward error of the unstructured
QZ code from LAPACK.  
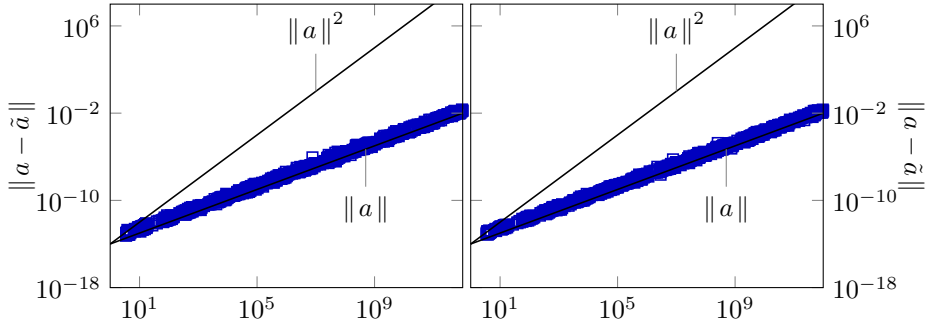
\begin{figure}
  \centering
  \begin{tikzpicture}%
  \begin{loglogaxis}[%
    axis background/.style={fill=white},%
    clip mode=individual,
    name=plot510a,%
    anchor = west,%
    scale only axis,%
    xmin=1e0, xmax=1e12,%
    ymin=1e-18, ymax=1e+8,%
    yticklabel pos=left,
    ylabel={$\norm{a-\tilde{a}}$},%
    every axis y label/.style= {at={(-0.25,0.5)},rotate=90},%
    width=0.36\textwidth,%
    height=0.18\textheight,%
    axis on top,%
    ]
    
    
    \addplot[SPECblue, semithick,only marks, mark=square]%
    table [x=norma,y=sQZ||a-ahat||]{dat/be_0.dat};  
        
    \addplot[black, semithick] coordinates{
      (1e00,1e-14)
      (1e04,1e-10)
      (1e08,1e-06)
      (1e12,1e-02)
    };
    \node[coordinate,pin={[pin distance=.50cm]below:{$\norm{a}$}}]
    at (axis cs:  5e+8 , 5e-6 ){};
    \addplot[black, semithick] coordinates{
      (1e00,1e-14)
      (1e04,1e-06)
      (1e08,1e+02)
      (1e12,1e+10)
    };
    \node[coordinate,pin={[pin distance=.50cm]above:{$\norm{a}^{2}$}}]
    at (axis cs:  1e+7 , 1e-0 ){};


  \end{loglogaxis}

  \begin{loglogaxis}[%
    axis background/.style={fill=white},%
    clip mode=individual,
    name=plot510b,%
    at = {($(plot510a.east)+(+3pt,0)$)},%
    anchor = west,%
    scale only axis,%
    xmin=1e0, xmax=1e12,%
    ymin=1e-18, ymax=1e+8,%
    yticklabel pos=right,
    ylabel={$\norm{a-\tilde{a}}$},%
    every axis y label/.style= {at={(1.25,0.5)},rotate=-90},%
    width=0.36\textwidth,%
    height=0.18\textheight,%
    axis on top,%
    ]
    
    
    \addplot[SPECblue, semithick,only marks, mark=square]%
    table [x=norma,y=sLQZ||a-ahat||]{dat/be_0.dat};  
    
    \addplot[black, semithick] coordinates{
      (1e00,1e-14)
      (1e04,1e-10)
      (1e08,1e-06)
      (1e12,1e-02)
    };
    \node[coordinate,pin={[pin distance=.50cm]below:{$\norm{a}$}}]
    at (axis cs:  5e+8 , 5e-6 ){};
    \addplot[black, semithick] coordinates{
      (1e00,1e-14)
      (1e04,1e-06)
      (1e08,1e+02)
      (1e12,1e+10)
    };
    \node[coordinate,pin={[pin distance=.50cm]above:{$\norm{a}^{2}$}}]
    at (axis cs:  1e+7 , 1e-0 ){};


  \end{loglogaxis}
  \end{tikzpicture}

  \caption{Backward error of scaled companion QZ algorithm (left) and unstructured QZ algorithm (right)}
  \label{fig:qzscale}
\end{figure}

\section{Conclusions}

The companion QR algorithm is not only faster than the unstructured QR algorithm, it also 
has a smaller backward error on the polynomial coefficients: 
$\norm{\tilde{a} - a} \lesssim u\,\norm{a}$.  In contrast the unstructured
QR algorithm only satisfies $\norm{\tilde{a}-a} \lesssim u\,\norm{a}^{2}$.  

As an alternative to the companion QR algorithm, we introduced a companion QZ algorithm that acts on a 
companion pencil.  Like the companion QR algorithm it runs in  $O(n^{2})$ time and uses $O(n)$ storage, but it 
is slower by a factor of about $5/3$.   The backward error of the companion QZ algorithm
also satisfies $\norm{\tilde{a} - a} \lesssim u\,\norm{a}$, provided that the polynomial is appropriately scaled 
before applying that algorithm.  

When we began this project we fully expected to find classes of problems for which the companion QZ algorithm
succeeds but companion QR fails.  So far we have not found any; the companion QR algorithm is much 
more robust than we had believed.  Since companion QR is faster, our recommendation at this time is to 
use companion QR and not companion QZ.  We do not exclude the possibility that classes of problems for
which companion QZ has superior performance will be found in the future. 

\section{Acknowledgments}

We thank the referees and the associate editor for comments that improved the paper.  In particular,
one referee pointed out that we can simplify the error analysis significantly by making use of the 
Faddeev-Leverrier  recurrence.  


\end{document}